\newtheorem{theorem}{Theorem}[section]
\newtheorem{proposition}[theorem]{Proposition}
\theoremstyle{definition}
\newtheorem{definition}[theorem]{Definition}
\newtheorem{example}[theorem]{Example}
\newtheorem{corollary}[theorem]{Corollary}
\newtheorem{remark}[theorem]{Remark}
\numberwithin{equation}{section}
\begin{document}
\title[]{SOFT $\beta $-OPEN SETS AND THEIR APPLICATIONS}
\author{Y. YUMAK, A. K. KAYMAKCI}
\address{Department of Mathematics, Sel\c{c}uk University, 42075 Campus,
Konya, Turkey}
\email{yunusyumak@selcuk.edu.tr, akeskin@selcuk.edu.tr }
\keywords{Soft sets, Soft topology, Soft $\beta $-open sets, Soft $\beta $%
-continuity, Soft $\beta $-irresolute, Soft $\beta $-homeomorphism}
\maketitle

\begin{abstract}
First of all, we focused soft $\beta $-open sets and soft $\beta $-closed
sets over the soft topological space and investigated some properties of
them. Secondly, we defined the concepts soft $\beta $-continuity, soft $%
\beta $-irresolute and soft $\beta $-homeomorphism on soft topological
spaces. We also obtained some characterizations of these mappings. Finally,
we observed that the collection $S\beta r$-$h(X,\tau ,E)$ was a soft group.
\end{abstract}

\section{\texttt{\ }INTRODUCTION}

Molodtsov $\left[ 1\right] $ initiated the theory of soft sets as a new
mathematical tool for dealing with uncertainties which traditional
mathematical tools cannot hold. He has shown several aplications of this
theory in solving many practical problems in economics, engineering, social
science, medical science, etc.

In the recent years, papers about soft sets theory and their applications in
various fields have been writing increasingly. Shabir and Naz $\left[ 3%
\right] $ introduced the notions of soft topological spaces which are
defined over an initial universe with a fixed set of parameters. The authors
introduced the definition of soft open sets, soft closed sets, soft interior
, soft closure, and soft seperation axioms. Chen $\left[ 6\right] $
introduced soft semi open sets and related properties. Gunduz Aras et al. $%
\left[ 9\right] $ introduced soft continuous mappings which are defined over
an initial universe set with a fixed set of parameters. Mahanta and Das $[8]$
introduced and characterized various forms of soft functions, like
semicontinuous, irresolute, semiopen soft functions. And Arockiarani and
Lancy $[7]$ introduced soft $g\beta $-closed sets and soft $gs\beta $-closed
sets in soft topological spaces and they obtained some properties in the
light of these defined sets.

In the present study, first of all, we have focused soft $\beta $-open and
soft $\beta $-closed sets over the soft topological space and investigated
some properties of them. Secondly, we have defined the concepts soft $\beta $%
-continuity, soft $\beta $-irresolute and soft $\beta $-homeomorphism on
soft topological spaces. We have also obtained some characterizations of
these mappings. Finally, we have observed that the collection $S\beta r$-$%
h(X,\tau ,E)$ was a soft group.

\section{PRELIMINARIES}

Let $U$ be an initial universe set and $E$ be a collection of all possible
parameters with respect to $U$, where parameters are the characteristics or
properties of objects in $U$. Let $P(U)$ denote the power set of $U$, and
let $\vspace{0in}\smallskip \allowbreak A{}\subseteq $ $E$.

\begin{definition}
$\left( \left[ 1\right] \right) $. A pair $(F,A)$ is called a soft set over $%
U$, where $F$ is a mapping given by 
\begin{equation*}
F:A\longrightarrow P(U)
\end{equation*}%
In other words, a soft set over $U$ is a parametrized family of subsets of
the universe $U$. For a particular $e\in A$. $F(e)$ may be considered the
set of $e$-approximate elements of the soft set $(F,A)$.
\end{definition}

\begin{definition}
$\left( \left[ 4\right] \right) $. For two soft sets $(F,A)$ and $(G,B)$
over a common universe $U$, we say that $(F,A)$ is a soft subset of $(G,B)$
if
\end{definition}

(i) $A\subseteq B$, and

(ii)$\forall e\in A$, $F(e)\subseteq G(e)$.

\ \ \ \ \ \ \ \ \ \ \ \ \ \ \ \ \ \ \ \ \ \ \ \ \ \ \ \ \ \ \ \ \ \ \ \ \ \
\ \ \ \ \ \ \ \ \ \ \ \ 

We write $(F,A)$ $\widetilde{\subseteq }$ $(G,B)$. $(F,A)$ is said that to
be a soft super set of $(G,B)$, if $(G,B)$ is a soft subset of $(F,A)$. We
denote it by $(F,A)$ $\widetilde{\supseteq }$ $(G,B)$.

\begin{definition}
$\left( \left[ 2\right] \right) $. A soft set $(F,A)$ over $U$ is said to be
\end{definition}

$(i)$ null soft set denoted by $\Phi $, if $\forall e\in A$, $F(e)=\phi $.

$(ii)$ absolute soft set denoted by $\widetilde{A}$, if $\forall e\in A$, $%
F(e)=U$.

\begin{definition}
For two soft sets $(F,A)$ and $(G,B)$ over a common universe $U$,
\end{definition}

$(i)$ $\left( \left[ 2\right] \right) $ union of two soft sets of $(F,A)$
and $(G,B)$ is the soft set $(H,C)$, where $C=A\cup B$, and $\forall e\in C$,

\begin{equation*}
H(e)=\left\{ 
\begin{array}{l}
F(e)\text{ \ \ \ \ \ \ } \\ 
G(e)\text{ \ \ \ \ \ } \\ 
F(e)\cup G(e)%
\end{array}%
\right. 
\begin{array}{l}
, \\ 
, \\ 
,%
\end{array}%
\begin{array}{l}
\text{\ if }e\in A-B\text{\ \ \ \ \ } \\ 
\text{\ if }e\in B-A \\ 
\text{\ if }e\in A\cap B%
\end{array}%
\end{equation*}

We write $(F,A)$ $\widetilde{\cup }$ $(G,B)=(H,C)$.

\ \ \ \ \ \ \ \ \ 

$(ii)$ $\left( \left[ 4\right] \right) $ intersection of $(F,A)$ and $(G,B)$
is the soft set $(H,C)$, where $C=A\cap B$, and $\forall e\in C$, $%
H(e)=F(e)\cap G(e)$. We write $(F,A)$ $\widetilde{\cap }$ $(G,B)=(H,C)$.

\ \ \ \ \ \ \ \ \ \ \ \ \ \ \ \ \ \ \ \ \ \ \ \ \ \ \ \ \ \ \ \ \ \ 

Let $X$ be an initial universe set and $E$ be the non-empty set of
parameters.

\begin{definition}
$\left( \left[ 3\right] \right) $. Let $(F,E)$ be a soft set over $X$ and $%
x\in X$ . We say that $x\in (F,E)$ read as $x$ belongs to the soft set $%
(F,E) $ whenever $x\in F(e)$ for all $e\in E$. Note that for any $x\in X$ . $%
x\notin (F,E),$ if $x\notin F(e)$ for some $e\in E$.
\end{definition}

\begin{definition}
$\left( \left[ 3\right] \right) $. Let $Y$ be a non-empty subset of $X$,
then $\widetilde{Y}$ denotes the soft set $(Y,E)$ over $X$ for which $%
Y(e)=Y, $ for all $e\in E$. In particular, $(X,E),$ will be denoted by $%
\widetilde{X}.$
\end{definition}

\begin{definition}
$\left( \left[ 3\right] \right) $. The relative complement of a soft set $%
(F,E)$ is denoted by $(F,E)^{\prime }$and is defined by $(F,E)^{\prime
}=(F^{\prime },E)$ where $F^{\prime }:E\longrightarrow P(U)$ is a mapping
given by $F^{\prime }(e)=U-F(e)$ for all $e\in E$.
\end{definition}

\begin{definition}
$\left( \left[ 3\right] \right) $. Let $\tau $ be the collection of soft
sets over $X$, then $\tau $ is said to be soft topology on $X$ if
\end{definition}

(1) $\Phi $, $\widetilde{X}$ are belong to $\tau $

(2) the union of any number of soft sets in $\tau $ belongs to $\tau $

(3) the intersection of any two soft sets in $\tau $ belongs to $\tau $

\bigskip The triplet $(X,\tau ,E)$ is called a soft topological space over $%
X.$ The members of $\tau $ are said to be soft open sets in $X$.

\begin{definition}
$\left( \left[ 3\right] \right) $. Let $(X,\tau ,E)$ be a soft topological
space over $X$. A soft set $(F,E)$ over $X$ is said to be a soft closed set
in $X$, if its relative complement $(F,E)^{\prime }$ belongs\ to $\tau .$
\end{definition}

\begin{definition}
Let $(X,\tau ,E)$ be a soft topological space over $X$ and $(F,E)$ be a soft
set over $X$. Then
\end{definition}

$(i)$ $\left( \left[ 5\right] \right) $ soft interior of soft set $(F,E)$ is
denoted by $(F,E)^{\circ }$ and is defined as the union of all soft open
sets contained in $(F,E)$. Thus $(F,E)^{\circ }$ is the largest soft open
set contained in $(F,E)$.

\bigskip $(ii)$ $\left( \left[ 3\right] \right) $ soft closure of $(F,E)$,
denoted by $\overline{(F,E)}$ is the intersection of all soft closed super
sets of $(F,E)$. Clearly $\overline{(F,E)}$ is the smallest soft closed set
over $X$ which contains $(F,E)$.

We will denote interior(resp. closure) of soft set $(F,E)$ as $int\left(
F,E\right) $(resp.$cl\left( F,E\right) $).

\begin{proposition}
$\left( \left[ 5\right] \right) $. Let $(X,\tau ,E)$ be a soft topological
space over $X$ and $(F,E)$ and $(G,E)$ be a soft set over $X$. Then
\end{proposition}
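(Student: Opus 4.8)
The plan is to read off each assertion directly from the two extremal descriptions supplied in the preceding definition: $int(F,E)$ is the \emph{largest} soft open set contained in $(F,E)$, while $cl(F,E)$ is the \emph{smallest} soft closed set containing $(F,E)$. First I would record the basic sandwiching $int(F,E)\,\widetilde{\subseteq}\,(F,E)\,\widetilde{\subseteq}\,cl(F,E)$, which is immediate since $int(F,E)$ is a soft union of subsets of $(F,E)$ while $cl(F,E)$ is a soft intersection of supersets of $(F,E)$, both containments holding parameter-wise at each $e\in E$. From the extremal property I would then derive the two characterizations: if $(F,E)$ is soft open it is itself one of the soft open sets contained in $(F,E)$, so it occurs in the union defining $int(F,E)$, forcing $int(F,E)=(F,E)$; conversely $int(F,E)=(F,E)$ exhibits $(F,E)$ as a union of members of $\tau$, which lies in $\tau$ by axiom (2). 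The closed case is dual, and idempotency $int(int(F,E))=int(F,E)$ and $cl(cl(F,E))=cl(F,E)$ then falls out because $int(F,E)$ is already soft open and $cl(F,E)$ is already soft closed.

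Next I would establish monotonicity. If $(F,E)\,\widetilde{\subseteq}\,(G,E)$, then every soft open set contained in $(F,E)$ is also contained in $(G,E)$, so the family whose union defines $int(F,E)$ is a subfamily of the one defining $int(G,E)$, giving $int(F,E)\,\widetilde{\subseteq}\,int(G,E)$; the closure statement follows dually by comparing the two families of soft closed supersets. With monotonicity in hand the binary intersection identity follows: by axiom (3) of the soft-topology definition, $int(F,E)\,\widetilde{\cap}\,int(G,E)$ is a soft open set contained in $(F,E)\,\widetilde{\cap}\,(G,E)$, hence contained in $int((F,E)\,\widetilde{\cap}\,(G,E))$, and the reverse containment is merely monotonicity applied to the two coordinate inclusions, so equality holds.

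The step I expect to be the main obstacle is not any single computation but keeping the asymmetry between union and intersection honest: for the soft union one obtains only $int(F,E)\,\widetilde{\cup}\,int(G,E)\,\widetilde{\subseteq}\,int((F,E)\,\widetilde{\cup}\,(G,E))$, since a union of two soft open sets sitting inside the respective pieces need not exhaust the largest soft open set inside the combined union, and I would be careful not to overclaim equality here. Finally, wherever a closure statement is paired with its interior counterpart I would pass between them through the soft De Morgan relations $(int(F,E))'=cl((F,E)')$ and $(cl(F,E))'=int((F,E)')$; proving these cleanly is the genuinely delicate point, as it requires complementation to convert an arbitrary soft union of open subsets into a soft intersection of closed supersets, which ultimately reduces to the ordinary set-theoretic De Morgan law applied coordinate-by-coordinate at each parameter $e\in E$.
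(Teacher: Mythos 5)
The paper offers no proof of this proposition at all: it is quoted verbatim from reference $[5]$ as a known preliminary, so there is no argument of the authors' own to compare yours against. Your derivation of the four stated items---$int(int(F,E))=int(F,E)$, monotonicity of $int$, $cl(cl(F,E))=cl(F,E)$, and monotonicity of $cl$---from the extremal characterizations (largest soft open subset, smallest soft closed superset) is the standard one and is correct: idempotency follows because $int(F,E)$ is already soft open by axiom (2) and $cl(F,E)$ is already soft closed, and monotonicity follows by comparing the defining families. Two small remarks. First, a substantial part of your write-up (the binary intersection identity, the union containment, and the De Morgan relations as a goal in themselves) concerns properties that are not part of this statement, so it is work beyond what is asked. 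Second, the closure half does silently rely on the fact that an arbitrary soft intersection of soft closed sets is soft closed; you correctly locate where this is discharged, namely the parameter-wise identities $(int(F,E))'=cl((F,E)')$ and $(cl(F,E))'=int((F,E)')$ together with axiom (2), and that reduction is sound since all soft sets here share the same parameter set $E$.
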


(1) $int\left( int\left( F,E\right) \right) =int\left( F,E\right) $

(2) $(F,E)$ $\widetilde{\subseteq }$ $(G,E)$ imples $int\left( F,E\right) $ $%
\widetilde{\subseteq }$ $int(G,E)$

(3) $cl\left( cl\left( F,E\right) \right) =cl\left( F,E\right) $

(4) $(F,E)$ $\widetilde{\subseteq }$ $(G,E)$ imples $cl\left( F,E\right) $ $%
\widetilde{\subseteq }$ $cl(G,E)$

\section{Soft $\protect\beta $-open sets and soft $\protect\beta $-closed
sets}

\begin{definition}
A soft set $(F,E)$ in a soft topological space $(X,\tau ,E)$ is said to be
\end{definition}

$(i)$ $\left( \left[ 6\right] \right) $ soft $semi$-open if\ $(F,E)$ $%
\widetilde{\subseteq }$ $cl(int(F,E))$.

$(ii)$ $\left( \left[ 7\right] \right) $ soft $pre$-open if \ $(F,E)$ $%
\widetilde{\subseteq }$ $int(cl(F,E))$.

$(iii)$ $\left( \left[ 7\right] \right) $ soft $\alpha $-open if $(F,E)$ $%
\widetilde{\subseteq }$ $int(cl(int(F,E)))$.

$(iv)$ $\left( \left[ 7\right] \right) $ soft $\beta $-open if $(F,E)$ $%
\widetilde{\subseteq }$ $cl(int(cl(F,E)))$.

$(v)$ $\left( \left[ 7\right] \right) $ soft $\beta $-closed if $\
int(cl(int(F,E)))$ $\widetilde{\subseteq }$ $(F,E)$.

The family of all soft $\beta $-open sets (resp. soft $\beta $-closed sets)
in a soft topological space $(X,\tau ,E)$ will be denote by $S.\beta .O(X)$
(resp. $S.\beta .C(X))$.

\begin{remark}
It is clear that $S.\beta .O(X)$ contains each of $S.S.O(X),$ $S.P.O(X)$ and 
$S.\alpha .O(X)$, the following diagram shows this fact.
\end{remark}

soft open set $\longrightarrow $\ \ soft $\alpha $-open set $\
\longrightarrow $ \ soft semi-open set\ \ 

$\ \ \ \ \ \ \ \ \ \ \ \ \ \ \ \ \ \ \ \ \ \ \ \ \ \ \ \ \ \ \ \ \ \ \
\downarrow $ \ \ \ \ \ \ \ \ \ \ \ \ \ \ \ \ \ \ \ \ \ \ \ \ \ \ \ $%
\downarrow $

\bigskip\ \ \ \ \ \ \ \ \ \ \ \ \ \ \ \ \ \ \ \ \ \ soft $pre$-open set\ \ \ 
$\longrightarrow $\ \ \ soft $\beta $-open set

\ \ \ \ \ \ \ \ \ \ \ \ \ \ \ \ \ \ \ \ \ \ \ \ \ \ \ \ \ \ \ \ \ \ \ \ \ \
\ \ \ \ \ \ \ \ \ \ \ \ \ \ \ \ \ \ \ \ \ \ \ \ \ \ \ \ \ \ \ \ \ \ \ \ \ \
\ \ \ \ \ \ \ \ \ \ \ \ \ \ \ \ \ \ \ \ \ \ \ \ \ \ \ \ \ \ \ \ \ \ \ \ \ \
\ \ \ \ \ \ \ \ \ \ \ \ \ \ 

The converses need not be true, in general, as show in the following example.

\begin{example}
Let $X=\{x_{1},x_{2},x_{3}\},$ $E=\{e_{1},e_{2}\}$ and $\tau =\{\Phi ,%
\widetilde{X},(F_{1},E),(F_{2},E),...,(F_{7},E)\}$ where $%
(F_{1},E),(F_{2},E),........,(F_{7},E)$ are soft sets over $X,$defined as
follows .
\end{example}

\ \ \ \ \ \ \ \ \ \ \ \ \ \ \ \ \ \ \ \ \ \ \ \ \ \ \ \ 

$%
\begin{array}{ll}
F_{1}(e_{1})=\{x_{1},x_{2}\} & \text{ \ \ \ }F_{1}(e_{2})=\{x_{1},x_{2}\} \\ 
F_{2}(e_{1})=\{x_{2}\} & \text{ \ \ \ }F_{2}(e_{2})=\{x_{1},x_{3}\} \\ 
F_{3}(e_{1})=\{x_{2},x_{3}\} & \text{ \ \ \ }F_{3}(e_{2})=\{x_{1}\} \\ 
F_{4}(e_{1})=\{x_{2}\} & \text{ \ \ \ }F_{4}(e_{2})=\{x_{1}\} \\ 
F_{5}(e_{1})=\{x_{1},x_{2}\} & \text{ \ \ \ }F_{5}(e_{2})=X \\ 
F_{6}(e_{1})=X\  & \text{ \ \ \ }F_{6}(e_{2})=\{x_{1},x_{2}\} \\ 
F_{7}(e_{1})=\{x_{2},x_{3}\} & \text{ \ \ \ }F_{7}(e_{2})=\{x_{1},x_{3}\}%
\end{array}%
$

\ \ \ \ \ \ \ \ \ \ \ \ \ \ \ \ \ \ \ \ \ \ \ \ \ \ \ \ \ \ \ \ \ 

Then $\tau $ defines a soft topology on $X$ and hence $(X,\tau ,E)$ is a
soft topological space over $X$ . Now we give a soft set $(H,E)$ in $(X,\tau
,E)$ is defined as follows:

\ \ \ \ \ \ \ \ \ \ \ \ \ \ \ \ \ \ 

$H(e_{1})=\phi ,$ $\ \ H(e_{2})=\{x_{1}\}$

\ \ \ \ \ \ \ \ \ \ \ \ \ \ \ \ \ \ \ \ \ \ \ \ \ \ \ \ \ 

Then, $(H,E)$ is a soft $pre$-open set \ but not a soft $\alpha $-open set,
also it is a soft $\beta $-open set but not soft $semi$-open set .

\bigskip\ \ \ \ \ \ \ \ \ \ \ \ \ \ \ \ \ \ \ \ 

Now we define the notion of soft supratopology is weaker than soft topology.

\begin{definition}
Let $\tau $ be the collection of soft sets over $X$, then $\tau $ is said to
be soft supratopology on $X$ if
\end{definition}

$(1)$ $\Phi $, $\widetilde{X}$ are belong to $\tau $

$(2)$ the union of any number of soft sets in $\tau $ belongs to $\tau $

\ \ \ \ \ \ \ 

We give the following property for soft $\beta $-open sets.

\begin{proposition}
\bigskip The collection $S.\beta .O(X)$ of all soft $\beta $-open sets of a
space $(X,\tau ,E)$ form a soft supratopology.
\end{proposition}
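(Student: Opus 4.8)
The plan is to verify directly the two axioms in the definition of a soft supratopology for the family $S.\beta.O(X)$, namely that it contains $\Phi$ and $\widetilde{X}$ and that it is closed under arbitrary soft unions. Throughout I work with the soft interior $int$ and soft closure $cl$, and the only external facts I will need are the monotonicity properties recorded in the stated Proposition, parts (2) and (4): if $(F,E)\widetilde{\subseteq}(G,E)$ then $int(F,E)\widetilde{\subseteq}int(G,E)$ and $cl(F,E)\widetilde{\subseteq}cl(G,E)$.

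For the first axiom, I would observe that both $\Phi$ and $\widetilde{X}$ satisfy the defining inclusion $(F,E)\widetilde{\subseteq}cl(int(cl(F,E)))$ trivially. Indeed $cl(\Phi)=\Phi$ and $int(\Phi)=\Phi$, so $cl(int(cl(\Phi)))=\Phi$ and $\Phi\widetilde{\subseteq}\Phi$; similarly $cl(\widetilde{X})=\widetilde{X}$ and $int(\widetilde{X})=\widetilde{X}$, which give $cl(int(cl(\widetilde{X})))=\widetilde{X}$. Hence $\Phi,\widetilde{X}\in S.\beta.O(X)$.

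For the second axiom, let $\{(F_i,E):i\in I\}$ be an arbitrary family of soft $\beta$-open sets and put $(F,E)=\widetilde{\bigcup}_{i\in I}(F_i,E)$. The key step is a chain of monotonicity inclusions applied to each index: since $(F_i,E)\widetilde{\subseteq}(F,E)$, parts (4) and (2) of the Proposition give successively $cl(F_i,E)\widetilde{\subseteq}cl(F,E)$, then $int(cl(F_i,E))\widetilde{\subseteq}int(cl(F,E))$, and finally $cl(int(cl(F_i,E)))\widetilde{\subseteq}cl(int(cl(F,E)))$. Combining this with the hypothesis $(F_i,E)\widetilde{\subseteq}cl(int(cl(F_i,E)))$ yields $(F_i,E)\widetilde{\subseteq}cl(int(cl(F,E)))$ for every $i$. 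Taking the soft union over $i\in I$ on the left-hand side then gives $(F,E)\widetilde{\subseteq}cl(int(cl(F,E)))$, so $(F,E)$ is soft $\beta$-open.

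The argument is essentially routine once the monotonicity facts are in hand; the only point requiring care is that the inclusion must be pushed through the composite operator $cl\,int\,cl$ one layer at a time, and that the comparison $cl(F_i,E)\widetilde{\subseteq}cl(F,E)$ holds for the whole (possibly infinite) family before the union is taken on the left. I do not expect a serious obstacle here. It is worth emphasizing what the statement deliberately does not assert: $S.\beta.O(X)$ need not be closed under finite intersection, which is precisely why one obtains only a soft supratopology rather than a soft topology. The same monotonicity trick fails for $\widetilde{\cap}$ because $cl\,int\,cl$ does not respect intersections, so axiom (3) of a soft topology is genuinely out of reach by this method.
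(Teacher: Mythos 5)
Your proof is correct. The paper verifies axiom (2) by a different manipulation: it starts from $\widetilde{\cup}_{i}(F_i,E)\ \widetilde{\subseteq}\ \widetilde{\cup}_{i}\,cl(int(cl(F_i,E)))$ and then pushes the union inward through the three operators, writing $\widetilde{\cup}_{i}\,cl(\cdot)=cl(\widetilde{\cup}_{i}\,\cdot)$ and $\widetilde{\cup}_{i}\,int(\cdot)\ \widetilde{\subseteq}\ int(\widetilde{\cup}_{i}\,\cdot)$ step by step until it reaches $cl(int(cl(\widetilde{\cup}_{i}(F_i,E))))$. Your route instead fixes the union $(F,E)$ first and applies monotonicity of $cl$, $int$, $cl$ to the single inclusion $(F_i,E)\ \widetilde{\subseteq}\ (F,E)$, concluding that each $(F_i,E)$ sits inside the fixed set $cl(int(cl(F,E)))$ and then taking the union on the left. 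The two arguments prove the same inclusion, but yours is the cleaner of the two: the paper's chain asserts equalities such as $\widetilde{\cup}_{i}\,cl(int(cl(F_i,E)))=cl(\widetilde{\cup}_{i}\,int(cl(F_i,E)))$ which for infinite families hold only as inclusions $\widetilde{\subseteq}$ (fortunately the direction needed), whereas your monotonicity argument never requires distributing $cl$ or $int$ over a union at all and so avoids that overstatement. Your closing remark about why the same method cannot yield closure under finite intersection is consistent with the paper, which gives an explicit counterexample immediately after the proposition.
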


\begin{proof}
(1) is obvious

(2)Let $(F_{i},E)$ $\in $ $S.\beta .O(X)$ for $\forall i\in I=\{1,2,3.....\}$%
. Then, for $\forall i\in I$

\ \ \ \ \ \ \ \ \ \ \ \ \ \ \ \ \ \ \ \ \ \ \ \ \ \ \ \ \ \ \ \ \ \ \ \ \ \
\ \ \ \ \ \ \ \ \ \ \ \ \ \ \ \ \ \ \ \ \ \ \ \ \ \ \ \ \ \ \ \ \ \ \ \ \ \
\ \ \ \ \ \ \ \ \ \ \ \ \ \ \ \ \ \ \ \ \ \ \ \ \ \ \ \ \ \ \ \ \ \ \ 

$%
\begin{array}{llll}
(F_{i},E)\text{ }\widetilde{\subseteq }\text{ }cl(int(cl(F_{i},E))) & 
\Longrightarrow & \underset{i\in I}{\widetilde{\cup }}(F_{i},E) & \widetilde{%
\subseteq }\underset{i\in I}{\widetilde{\cup }}(cl(int(cl(F_{i},E)))) \\ 
&  &  & =cl(\underset{i\in I}{\widetilde{\cup }}(int(cl(F_{i},E)))) \\ 
&  &  & \widetilde{\subseteq }\text{ }cl(int(\underset{i\in I}{\widetilde{%
\cup }}(cl(F_{i},E)))) \\ 
&  &  & =cl(int(cl(\underset{i\in I}{\widetilde{\cup }}(F_{i},E))))%
\end{array}%
$
\end{proof}

The intersection of two soft $\beta $-open sets need not be soft $\beta $%
-open set as is illustrated by the following example.

\begin{example}
Let $X=\{x_{1},x_{2}\}$, $E=\{e_{1},e_{2}\}$ and $\tau =\{\Phi ,\widetilde{X}%
,(F_{1},E),(F_{2},E),(F_{3},E)\}$ where $(F_{1},E),(F_{2},E),(F_{3},E)$ are
soft sets over $X,$defined as follows.
\end{example}

$%
\begin{array}{ll}
F_{1}(e_{1})=\{x_{1}\} & F_{1}(e_{2})=\{x_{2}\} \\ 
F_{2}(e_{1})=\{x_{1},x_{2}\} & F_{2}(e_{2})=\{x_{2}\} \\ 
F_{3}(e_{1})=\{x_{1}\} & F_{3}(e_{2})=\{x_{1},x_{2}\}%
\end{array}%
$

\ \ \ \ \ \ \ \ \ \ \ \ \ \ \ \ \ \ \ \ \ \ \ \ \ \ \ \ \ \ \ \ \ \ \ \ \ \
\ \ \ \ \ \ \ \ \ \ \ \ \ \ \ \ \ \ \ \ \ \ \ \ \ \ \ \ \ \ \ \ \ \ \ \ \ \
\ \ \ \ \ \ \ \ \ \ \ \ \ \ \ \ \ \ \ \ \ \ \ \ \ \ \ \ \ \ \ \ \ \ \ \ \ \
\ \ \ \ \ \ \ \ \ \ \ \ \ \ \ \ \ \ \ \ \ \ \ \ \ \ \ \ \ \ \ \ \ \ \ \ \ \
\ \ \ \ \ \ \ \ \ \ \ \ \ \ \ \ \ \ \ \ \ \ \ \ \ \ \ \ \ \ \ \ \ \ \ \ \ \
\ \ \ \ \ \ \ \ \ \ \ \ \ \ \ \ \ \ \ \ \ \ \ \ \ \ \ \ \ \ \ \ \ \ \ \ \ \
\ \ \ \ \ \ \ \ \ \ \ \ \ \ \ \ \ \ \ \ \ 

Then $\tau $ defines a soft topology on $X$ and hence $(X,\tau ,E)$ is a
soft topological space over $X$ .Now we give two soft sets $(G,E),$ $(H,E)$
in $(X,\tau ,E)$ is defined as follows:

\ \ \ \ \ \ \ \ \ \ \ \ \ \ \ \ \ \ \ \ \ \ \ \ \ \ \ \ \ \ \ \ \ \ \ \ \ \
\ \ \ \ \ \ \ \ \ \ \ \ \ \ \ \ \ \ \ \ \ \ 

$%
\begin{array}{ll}
G(e_{1})=\{x_{2}\} & G(e_{2})=\{x_{2}\} \\ 
H(e_{1})=\{x_{1},x_{2}\} & H(e_{2})=\{x_{1}\}%
\end{array}%
$

\ \ \ \ \ \ \ \ \ \ \ \ \ \ \ \ \ \ \ \ \ \ \ \ \ \ \ \ \ \ \ \ \ \ \ \ \ \
\ \ \ \ \ \ \ \ \ \ \ \ \ \ \ \ \ \ \ \ \ \ \ \ \ \ \ \ \ \ \ \ \ \ \ \ \ \
\ \ \ \ \ \ \ \ 

Then, $(G,E)$ and $(H,E)$ are soft $\beta $-open sets over $X$, therefor,

$(G,E)$ $\widetilde{\cap }$ $(H,E)=\{\{x_{2}\},\phi \}=(K,E)$ and $%
cl(int(cl(K,E)))=$ $\Phi $ . Hence, $(K,E)$ is not a soft $\beta $-open set.

\bigskip\ \ \ \ \ \ \ \ \ \ \ \ \ \ 

We have the following proposition by using relative complement.

\begin{proposition}
Arbitrary intersection of soft $\beta $-closed sets is soft $\beta $-closed.
\end{proposition}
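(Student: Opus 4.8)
The plan is to dualize the statement through relative complements and reduce it to the union result for soft $\beta$-open sets established in the preceding Proposition. The two tools I would lean on are the De Morgan identities for the soft interior and soft closure operators, namely $(int(F,E))^{\prime }=cl((F,E)^{\prime })$ and $(cl(F,E))^{\prime }=int((F,E)^{\prime })$, together with the De Morgan law $\left( \underset{i\in I}{\widetilde{\cap }}(F_{i},E)\right) ^{\prime }=\underset{i\in I}{\widetilde{\cup }}(F_{i},E)^{\prime }$ relating soft intersection and soft union. The first pair of identities is standard and follows directly from the definition of relative complement together with the definitions of soft interior and soft closure, so I would either verify them in one line or cite them explicitly rather than use them silently.

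First I would record the complementation duality between the two classes: a soft set $(F,E)$ is soft $\beta $-closed if and only if $(F,E)^{\prime }$ is soft $\beta $-open. Starting from $int(cl(int(F,E)))\ \widetilde{\subseteq }\ (F,E)$ and taking relative complements, which reverses $\widetilde{\subseteq }$, repeated application of the two De Morgan identities above converts the left-hand side into $cl(int(cl((F,E)^{\prime })))$, yielding $(F,E)^{\prime }\ \widetilde{\subseteq }\ cl(int(cl((F,E)^{\prime })))$, i.e. soft $\beta $-openness of $(F,E)^{\prime }$. Running the same computation in reverse gives the converse implication, so the equivalence holds.

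With this equivalence in hand, let $\{(F_{i},E)\}_{i\in I}$ be an arbitrary family of soft $\beta $-closed sets. By the first step each complement $(F_{i},E)^{\prime }$ is soft $\beta $-open, so by the preceding Proposition their soft union $\underset{i\in I}{\widetilde{\cup }}(F_{i},E)^{\prime }$ is again soft $\beta $-open. Applying the De Morgan law $\underset{i\in I}{\widetilde{\cup }}(F_{i},E)^{\prime }=\left( \underset{i\in I}{\widetilde{\cap }}(F_{i},E)\right) ^{\prime }$, the complement of $\underset{i\in I}{\widetilde{\cap }}(F_{i},E)$ is soft $\beta $-open; invoking the equivalence once more, $\underset{i\in I}{\widetilde{\cap }}(F_{i},E)$ is soft $\beta $-closed, which is the desired conclusion.

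The only delicate point is the first step: the chain of complementation identities must be applied in the correct order, carefully tracking which of $int$ and $cl$ is outermost at each stage, since one misplaced swap collapses the whole argument. Once that bookkeeping is in place, the remainder is a purely formal transfer through the union Proposition and carries no further topological content.
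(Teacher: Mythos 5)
Your proof is correct, but it follows a different route from the one the paper actually writes down. The paper's proof is a direct computation mirroring its proof that arbitrary unions of soft $\beta$-open sets are soft $\beta$-open: starting from $int(cl(int(F_{i},E)))\ \widetilde{\subseteq }\ (F_{i},E)$ for each $i$, it intersects over $i$ and pushes the intersection inside $int$ and $cl$ step by step, using that $int(\widetilde{\cap }_{i}(G_{i},E))\ \widetilde{\subseteq }\ \widetilde{\cap }_{i}int(G_{i},E)$ and $cl(\widetilde{\cap }_{i}(G_{i},E))\ \widetilde{\subseteq }\ \widetilde{\cap }_{i}cl(G_{i},E)$, arriving at $int(cl(int(\widetilde{\cap }_{i}(F_{i},E))))\ \widetilde{\subseteq }\ \widetilde{\cap }_{i}(F_{i},E)$. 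You instead dualize through relative complements, establishing the equivalence that $(F,E)$ is soft $\beta$-closed iff $(F,E)^{\prime }$ is soft $\beta$-open, and then transferring the union result via De Morgan. Amusingly, the paper's own lead-in sentence (``We have the following proposition by using relative complement'') announces exactly your strategy, even though the displayed proof does not use it. Your route buys economy --- the topological work is done once, in the union proposition, and everything else is formal --- at the cost of needing the complementation identities $(int(F,E))^{\prime }=cl((F,E)^{\prime })$, $(cl(F,E))^{\prime }=int((F,E)^{\prime })$ and the soft De Morgan law, none of which the paper states explicitly; you are right to flag that these must be cited or verified, and that the order of the swaps in deriving $cl(int(cl((F,E)^{\prime })))$ is the one place where care is needed. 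The paper's direct proof avoids that dependency but silently reuses the same monotonicity facts about $int$ and $cl$ under arbitrary intersections (and writes a couple of them as equalities when only inclusions hold, though only the correct direction is actually used). Both arguments are sound.
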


\begin{proof}
Let $(F_{i},E)\in S.\beta .C(X)$ for $\forall i\in I=\{1,2,3.....\}$ .Then,
for $\forall i\in I,$

\ \ \ \ \ \ \ \ \ \ \ \ \ \ \ \ \ \ \ \ \ \ \ \ \ \ \ \ \ \ \ \ \ \ \ \ \ \
\ \ \ \ \ \ \ \ \ \ \ \ \ \ \ \ \ \ \ \ \ \ \ \ \ \ \ \ \ \ \ \ \ \ \ \ \ \
\ \ \ \ \ \ \ \ \ \ \ \ \ \ \ \ \ \ \ \ \ \ \ \ \ \ \ \ \ \ \ \ \ \ \ \ \ \
\ \ \ \ \ \ \ \ \ \ \ \ \ \ \ \ \ \ \ \ \ \ \ \ \ \ \ \ \ \ \ \ \ \ \ \ \ \
\ \ \ \ \ \ \ \ \ \ \ \ \ \ \ \ \ \ \ \ \ \ \ \ \ \ \ \ \ \ \ \ \ \ \ \ \ \
\ \ \ \ 

$%
\begin{array}{llll}
(F_{i},E)\text{ }\widetilde{\supseteq }\text{ }int(cl(int(F_{i},E))) & 
\Longrightarrow & \underset{i\in I}{\widetilde{\cap }}(F_{i},E) & \widetilde{%
\supseteq }\text{ }\underset{i\in I}{\widetilde{\cap }}%
(int(cl(int(F_{i},E)))) \\ 
&  &  & =int(\underset{i\in I}{\widetilde{\cap }}(cl(int(F_{i},E)))) \\ 
&  &  & \widetilde{\supseteq }\text{ }int(cl(\underset{i\in I}{\widetilde{%
\cap }}(int(F_{i},E)))) \\ 
&  &  & =int(cl(int(\underset{i\in I}{\widetilde{\cap }}(F_{i},E))))%
\end{array}%
$
\end{proof}

The union of two soft $\beta $-closed sets need not be soft $\beta $-closed
set as is illustrated by the following example.

\begin{example}
Let $(X,\tau ,E)$ as in Example 3.6. Now we give two soft sets $(G,E),$ $%
(H,E)$ in $(X,\tau ,E)$ is defined as follows:
\end{example}

\ \ \ \ \ \ \ \ \ \ \ \ \ \ \ \ \ \ \ \ \ \ \ \ \ \ \ \ \ \ \ \ \ \ \ \ \ \
\ \ \ \ \ \ \ \ \ \ \ \ \ \ \ \ \ \ \ \ \ \ 

$%
\begin{array}{ll}
G(e_{1})=\{x_{1}\} & G(e_{2})=\{x_{1}\} \\ 
H(e_{1})=\phi & H(e_{2})=\{x_{2}\}%
\end{array}%
$

\ \ \ \ \ \ \ \ \ \ \ \ \ \ \ \ \ \ \ \ \ \ \ \ \ \ \ \ \ \ \ \ \ \ \ \ \ \
\ \ \ \ \ \ \ \ \ \ \ \ \ \ \ \ \ \ \ \ \ \ \ \ \ \ \ \ \ \ \ \ \ \ \ \ \ \
\ \ \ \ \ \ 

Then, $(G,E)$ and $(H,E)$ are soft $\beta $-closed sets over $X$ , therefor,

$(G,E)$ $\widetilde{\cup }$ $(H,E)=\{\{x_{1}\},\{x_{1},x_{2}\}\}=(K,E)$ and $%
int(cl(int(K,E)))=\widetilde{X}$ . Hence, $(K,E)$ is not a soft $\beta $%
-closed set.

\begin{theorem}
Each soft $\beta $-open set which is soft semi-closed is soft semi-open .
\end{theorem}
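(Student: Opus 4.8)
The plan is to unwind the two hypotheses into their defining inclusions and then chain together the monotonicity and idempotence clauses of Proposition 2.11 in the correct order. First I would record what is being assumed. A soft set $(F,E)$ is soft semi-closed precisely when its relative complement is soft semi-open, which by Definition 3.1$(i)$ and the complementation rules $int\left( (F,E)'\right) = \left( cl(F,E)\right)'$ and $cl\left( (F,E)'\right) = \left( int(F,E)\right)'$ translates into $int(cl(F,E)) \ \widetilde{\subseteq}\ (F,E)$. So the two hypotheses are $(F,E)\ \widetilde{\subseteq}\ cl(int(cl(F,E)))$ (soft $\beta$-open) and $int(cl(F,E))\ \widetilde{\subseteq}\ (F,E)$ (soft semi-closed), and the goal is the soft semi-open inclusion $(F,E)\ \widetilde{\subseteq}\ cl(int(F,E))$.

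The key step is to push the semi-closed inclusion through the interior and then the closure operator. Starting from $int(cl(F,E))\ \widetilde{\subseteq}\ (F,E)$, I would apply $int$ to both sides using the monotonicity clause (2) of Proposition 2.11, and then simplify the left-hand side with the idempotence clause (1), namely $int(int(cl(F,E))) = int(cl(F,E))$; this produces $int(cl(F,E))\ \widetilde{\subseteq}\ int(F,E)$. Next I would apply $cl$ to this new inclusion using the monotonicity clause (4), obtaining $cl(int(cl(F,E)))\ \widetilde{\subseteq}\ cl(int(F,E))$.

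Finally I would splice this together with the soft $\beta$-open hypothesis to get the chain $(F,E)\ \widetilde{\subseteq}\ cl(int(cl(F,E)))\ \widetilde{\subseteq}\ cl(int(F,E))$, and transitivity of $\widetilde{\subseteq}$ gives exactly $(F,E)\ \widetilde{\subseteq}\ cl(int(F,E))$, which is the definition of soft semi-open. I do not anticipate any genuine obstacle here: the argument is purely a bookkeeping exercise in applying the four interior/closure properties in sequence. The only point deserving care is the soft semi-closed hypothesis, since semi-closedness was not explicitly spelled out in the preliminaries; I would therefore state the equivalent inner form $int(cl(F,E))\ \widetilde{\subseteq}\ (F,E)$ at the outset (derived via relative complements as above) so that the subsequent chain of inclusions is unambiguous.
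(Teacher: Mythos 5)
Your proof is correct and follows essentially the same route as the paper: both combine the two hypotheses into the chain $int(cl(F,E))\ \widetilde{\subseteq}\ (F,E)\ \widetilde{\subseteq}\ cl(int(cl(F,E)))$ and conclude semi-openness from it. The only difference is cosmetic: the paper sets $(U,E)=int(cl(F,E))$ and invokes the standard ``open sandwich'' characterization $(U,E)\ \widetilde{\subseteq}\ (F,E)\ \widetilde{\subseteq}\ cl(U,E)$ of soft semi-open sets, whereas you make that last step explicit via monotonicity and idempotence of $int$ and $cl$.
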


\begin{proof}
$(F,E)\in S.\beta .O(X)\Longrightarrow (F,E)$ $\widetilde{\subseteq }$ $%
cl(int(cl(F,E)))$ and $(F,E)\in S.S.C(X)\Longrightarrow int(cl(F,E))$ $%
\widetilde{\subseteq }$ $(F,E)$.Then

$int(cl(F,E))$ $\widetilde{\subseteq }$ $(F,E)$ $\widetilde{\subseteq }$ $%
cl(int(cl(F,E)))$. Since $int(cl(F,E))=(U,E)$ is a soft open set, we can
write $(U,E)$ $\widetilde{\subseteq }$ $(F,E)$ $\widetilde{\subseteq }$ $%
cl(U,E)$. Hence $(F,E)$ is a soft semi-open set.
\end{proof}

\begin{corollary}
If a soft set $(F,E)$ in a soft topological space $(X,\tau ,E)$ is soft $%
\beta $-closed and soft $semi$-open, then $(F,E)$ is soft $semi$-closed.
\end{corollary}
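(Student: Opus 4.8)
The plan is to obtain this corollary from Theorem 3.11 by passing to relative complements. The observation driving the proof is that ``soft semi-closed'' and ``soft semi-open'' are interchanged by the relative complement of Definition 2.7, and likewise ``soft $\beta$-open'' and ``soft $\beta$-closed'' are interchanged by it. Thus, writing $(G,E)=(F,E)'$, the hypotheses on $(F,E)$ should translate into exactly the hypotheses of Theorem 3.11 on $(G,E)$, and the conclusion of that theorem should translate back into the desired conclusion for $(F,E)$. Throughout I would use the two soft duality identities $(int(H,E))'=cl\big((H,E)'\big)$ and $(cl(H,E))'=int\big((H,E)'\big)$, which follow from the definitions of soft interior and soft closure together with the fact that $(H,E)$ is soft open iff $(H,E)'$ is soft closed.

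First I would verify the two hypotheses of Theorem 3.11 for $(G,E)=(F,E)'$. Since $(F,E)$ is soft $\beta$-closed, $int(cl(int(F,E)))\ \widetilde{\subseteq}\ (F,E)$; complementing reverses the inclusion, and applying the duality identities three times turns the complemented left side into $cl(int(cl(G,E)))$, giving $(G,E)\ \widetilde{\subseteq}\ cl(int(cl(G,E)))$, i.e. $(G,E)\in S.\beta.O(X)$. Since $(F,E)$ is soft semi-open, $(F,E)\ \widetilde{\subseteq}\ cl(int(F,E))$; complementing yields $int(cl(G,E))\ \widetilde{\subseteq}\ (G,E)$, i.e. $(G,E)$ is soft semi-closed. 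Theorem 3.11 then applies and gives that $(G,E)$ is soft semi-open, $(G,E)\ \widetilde{\subseteq}\ cl(int(G,E))$. Complementing once more and using the identities again produces $int(cl(F,E))\ \widetilde{\subseteq}\ (F,E)$, which is precisely soft semi-closedness of $(F,E)$.

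The only place requiring care is the bookkeeping of the complement identities inside the nested operator $int\circ cl\circ int$: one must apply $(int(\cdot))'=cl((\cdot)')$ and $(cl(\cdot))'=int((\cdot)')$ in the right order and check that each $\widetilde{\subseteq}$ becomes $\widetilde{\supseteq}$. I expect no real obstacle here beyond this routine verification, since these identities are the standard soft analogues of the classical interior--closure duality.

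A self-contained alternative, avoiding the complement identities altogether, would argue directly using Proposition 2.11. From soft semi-openness, $(F,E)\ \widetilde{\subseteq}\ cl(int(F,E))$, whence by monotonicity and idempotence of closure $cl(F,E)\ \widetilde{\subseteq}\ cl(int(F,E))$, and therefore $int(cl(F,E))\ \widetilde{\subseteq}\ int(cl(int(F,E)))$. From soft $\beta$-closedness, $int(cl(int(F,E)))\ \widetilde{\subseteq}\ (F,E)$. Chaining these two inclusions gives $int(cl(F,E))\ \widetilde{\subseteq}\ (F,E)$, i.e. $(F,E)$ is soft semi-closed. Since this route uses only Proposition 2.11 and the stated definitions, I would likely present it alongside, or in place of, the complement argument.
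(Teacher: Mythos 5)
Your proposal is correct, and your primary route---passing to the relative complement $(G,E)=(F,E)'$ and invoking Theorem 3.11---is exactly the duality argument the paper intends, since it states this result as a corollary of that theorem without writing out a proof. Both your complementation bookkeeping and your self-contained direct chain $int(cl(F,E))\ \widetilde{\subseteq}\ int(cl(int(F,E)))\ \widetilde{\subseteq}\ (F,E)$ check out.
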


\begin{proposition}
In an indiscrete soft topological space $(X,\tau ,E),$ each soft $\beta $%
-open is soft $pre$-open.
\end{proposition}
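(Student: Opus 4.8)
The plan is to compute the soft interior and soft closure operators explicitly in the indiscrete case and then verify the soft pre-open inclusion directly; in fact the hypothesis that $(F,E)$ be soft $\beta$-open will turn out to be inessential, since I expect every soft set to be soft pre-open here. First I would record the two relevant families of special sets: since in an indiscrete space $\tau=\{\Phi,\widetilde{X}\}$, the soft closed sets are by definition the relative complements of the members of $\tau$, namely $\widetilde{X}{}'=\Phi$ and $\Phi'=\widetilde{X}$, so the soft closed sets also form the family $\{\Phi,\widetilde{X}\}$.

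The crucial step is the computation of $cl(F,E)$, for which I would split into two cases. If $(F,E)=\Phi$, then $\Phi$ is itself a soft closed superset of $(F,E)$, so $cl(F,E)=\Phi$, whence $int(cl(F,E))=int(\Phi)=\Phi\,\widetilde{\supseteq}\,(F,E)$ and $(F,E)$ is trivially soft pre-open. If $(F,E)\neq\Phi$, then $F(e_{0})\neq\phi$ for some $e_{0}\in E$, and I would observe that the only soft closed superset of $(F,E)$ is $\widetilde{X}$; indeed $\Phi$ fails to be a superset, since that would force $F(e_{0})\subseteq\phi$. Hence $cl(F,E)=\widetilde{X}$, being the smallest soft closed superset. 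Applying the interior gives $int(cl(F,E))=int(\widetilde{X})=\widetilde{X}$ (as $\widetilde{X}\in\tau$), and therefore $(F,E)\,\widetilde{\subseteq}\,\widetilde{X}=int(cl(F,E))$, which is precisely the defining inclusion for soft pre-openness.

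Combining the two cases shows that every soft set, and in particular every soft $\beta$-open set in $S.\beta.O(X)$, satisfies $(F,E)\,\widetilde{\subseteq}\,int(cl(F,E))$, which proves the proposition. I do not anticipate any genuine obstacle here: the only point deserving a moment's care is verifying that a nonempty soft set cannot have $\Phi$ as a soft closed superset, so that its soft closure jumps all the way up to $\widetilde{X}$; once this is settled, the rest follows immediately from the definitions of soft interior and soft closure together with the description of the soft closed sets recorded above.
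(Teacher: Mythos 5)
Your proof is correct and takes essentially the same route as the paper's: a case split on $(F,E)=\Phi$ versus $(F,E)\neq\Phi$, followed by the computation that a nonempty soft set has $cl(F,E)=\widetilde{X}$ and hence $int(cl(F,E))=\widetilde{X}\ \widetilde{\supseteq}\ (F,E)$ in the indiscrete space. The only difference is that you make explicit the (correct) observation that the soft $\beta$-open hypothesis is never actually used; this is already implicit in the paper's chain $cl(int(cl(F,E)))=\widetilde{X}=int(cl(F,E))$.
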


\begin{proof}
If $(F,E)=\Phi $, then $(F,E)$ is a soft $\beta $-open and a soft $pre$%
-open. Let $(F,E)\neq \Phi ,$ then,

$(F,E)\in S.\beta .O(X)\Longrightarrow (F,E)$ $\widetilde{\subseteq }$ $%
cl(int(cl(F,E)))=\widetilde{X}=(int(cl(F,E))$. Hence $(F,E)$ is a soft $pre$%
-open.
\end{proof}

\begin{theorem}
A soft set $(F,E)$ in a soft topological space $(X,\tau ,E)$ is soft $\beta $%
-closed if and only if
\end{theorem}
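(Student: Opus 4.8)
The plan is to read the completed statement as the natural complement--duality characterization: $(F,E)$ is soft $\beta $-closed if and only if its relative complement $(F,E)^{\prime }$ is soft $\beta $-open. The whole argument rests on the two De Morgan--type identities relating complement with soft interior and soft closure, namely $(int(F,E))^{\prime }=cl((F,E)^{\prime })$ and $(cl(F,E))^{\prime }=int((F,E)^{\prime })$. These follow directly from the definition of relative complement together with the descriptions of soft interior as the union of the soft open subsets and of soft closure as the intersection of the soft closed supersets; I would record them as a short preliminary observation before the main computation.

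First I would assume $(F,E)$ is soft $\beta $-closed, so that $int(cl(int(F,E)))$ $\widetilde{\subseteq }$ $(F,E)$ by definition. Taking relative complements of both sides reverses the soft inclusion, giving $(F,E)^{\prime }$ $\widetilde{\subseteq }$ $(int(cl(int(F,E))))^{\prime }$. Then I would peel off the three operators one at a time with the identities above:
\begin{equation*}
(int(cl(int(F,E))))^{\prime }=cl((cl(int(F,E)))^{\prime })=cl(int((int(F,E))^{\prime }))=cl(int(cl((F,E)^{\prime }))).
\end{equation*}
Hence $(F,E)^{\prime }$ $\widetilde{\subseteq }$ $cl(int(cl((F,E)^{\prime })))$, which is exactly the assertion that $(F,E)^{\prime }$ is soft $\beta $-open.

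For the converse I would run the same chain backwards. Starting from $(F,E)^{\prime }$ $\widetilde{\subseteq }$ $cl(int(cl((F,E)^{\prime })))$, taking complements and applying the identities in the opposite direction, using $((F,E)^{\prime })^{\prime }=(F,E)$, returns $int(cl(int(F,E)))$ $\widetilde{\subseteq }$ $(F,E)$, so $(F,E)$ is soft $\beta $-closed. Since each step is reversible, the two conditions are equivalent.

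I do not anticipate a serious obstacle: the argument is essentially a bookkeeping exercise in pushing complements through the $int$--$cl$ tower. The one point requiring care is using the duality identities in the correct order, since it is easy to mis-pair $int$ with $cl$, and making sure the inclusion is genuinely reversed rather than preserved at each complementation step. If the two duality identities are not taken as already known, establishing them cleanly from the definition of relative complement is the only part that needs a short separate argument.
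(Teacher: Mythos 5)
You have proved a true proposition, but not the one the theorem asserts. The ``if and only if'' condition of this theorem is not ``$(F,E)^{\prime }$ is soft $\beta $-open''; it is the displayed inclusion that follows the theorem environment in the source, namely
\begin{equation*}
cl(\widetilde{X}-cl(int(F,E)))-(\widetilde{X}-cl(F,E))\ \widetilde{\supseteq }\ cl(F,E)-(F,E).
\end{equation*}
The paper's proof reduces this inclusion to the definition of soft $\beta $-closedness by a chain of equivalences: first it rewrites $cl(\widetilde{X}-cl(int(F,E)))$ as $\widetilde{X}-int(cl(int(F,E)))$ using exactly the complement--interior--closure duality you invoke; then it uses the set-difference identity $(A,E)-(\widetilde{X}-(B,E))=(A,E)\ \widetilde{\cap }\ (B,E)$ to turn the left side into $cl(F,E)-int(cl(int(F,E)))$; and finally it observes that $cl(F,E)-int(cl(int(F,E)))\ \widetilde{\supseteq }\ cl(F,E)-(F,E)$ holds precisely when $int(cl(int(F,E)))\ \widetilde{\subseteq }\ (F,E)$ (a step that quietly uses $int(cl(int(F,E)))\ \widetilde{\subseteq }\ cl(F,E)$). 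None of this set-difference bookkeeping appears in your argument, so the actual content of the theorem is not established.

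What you did prove --- that $(F,E)$ is soft $\beta $-closed iff $(F,E)^{\prime }$ is soft $\beta $-open, via $(int(cl(int(F,E))))^{\prime }=cl(int(cl((F,E)^{\prime })))$ --- is correct and is the duality the authors implicitly rely on elsewhere (e.g.\ in Proposition 3.8 and in passing between $S.\beta .O(X)$ and $S.\beta .C(X)$). Your two De Morgan-type identities are also the engine of the first step of the paper's proof here. So the work is not wasted, but to prove the stated theorem you still need to carry out the reduction of the given inclusion to $int(cl(int(F,E)))\ \widetilde{\subseteq }\ (F,E)$, checking in particular that each manipulation of soft set differences is reversible so that both directions of the equivalence are obtained.
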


$cl(\widetilde{X}-cl(int(F,E)))-(\widetilde{X}-cl(F,E))$ $\widetilde{%
\supseteq }$ $cl(F,E)-(F,E)$.

\begin{proof}
$cl(\widetilde{X}-cl(int(F,E)))-(\widetilde{X}-cl(F,E))$ $\widetilde{%
\supseteq }$ $cl(F,E)-(F,E)\Longleftrightarrow (\widetilde{X}%
-int(cl(int(F,E))))-(\widetilde{X}-cl(F,E))$ $\widetilde{\supseteq }$ $%
cl(F,E)-(F,E)\Longleftrightarrow (\widetilde{X}-int(cl(int(F,E))))$ $%
\widetilde{\cap }$ $cl(F,E)$ $\widetilde{\supseteq }$ $cl(F,E)-(F,E)%
\Longleftrightarrow (\widetilde{X}$ $\widetilde{\cap }$ $%
cl(F,E))-[int(cl(int(F,E)))$ $\widetilde{\cap }$ $cl(F,E)]$ $\widetilde{%
\supseteq }$ $cl(F,E)-(F,E)\Longleftrightarrow cl(F,E)-int(cl(int(F,E)))$ $%
\widetilde{\supseteq }$ $cl(F,E)-(F,E)\Longleftrightarrow (F,E)$ $\widetilde{%
\supseteq }$ $int(cl(int(F,E)))\Longleftrightarrow $ $(F,E)$ is soft $\beta $%
-closed.
\end{proof}

\begin{theorem}
Each soft $\beta $-open and soft $\alpha $-closed set is soft closed.
\end{theorem}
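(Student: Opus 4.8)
The plan is to reduce both hypotheses to inclusions involving the single operator $cl(int(cl(\cdot)))$ and then trap $(F,E)$ between two copies of the same soft set. First I would unwind what ``soft $\alpha$-closed'' should mean. Since the paper builds closed-type notions as complements of the corresponding open-type notions (compare soft $\beta$-open in Definition 3.1(iv) with soft $\beta$-closed in Definition 3.1(v)), I take $(F,E)$ soft $\alpha$-closed to mean that its relative complement $(F,E)'$ is soft $\alpha$-open, i.e. $(F,E)' \widetilde{\subseteq} int(cl(int((F,E)')))$. Applying the relative complement to this inclusion and using the soft interior--closure duality $(int(G,E))' = cl((G,E)')$ and $(cl(G,E))' = int((G,E)')$ repeatedly, the operator $int(cl(int(\cdot)))$ is converted into $cl(int(cl(\cdot)))$ while the inclusion reverses, yielding the closed-form characterization $cl(int(cl(F,E))) \widetilde{\subseteq} (F,E)$.

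Next, soft $\beta$-openness of $(F,E)$ is, by Definition 3.1(iv), precisely $(F,E) \widetilde{\subseteq} cl(int(cl(F,E)))$. Combining this with the inclusion obtained in the previous step gives the sandwich $(F,E) \widetilde{\subseteq} cl(int(cl(F,E))) \widetilde{\subseteq} (F,E)$, so both inclusions are in fact equalities and $(F,E) = cl(int(cl(F,E)))$.

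Finally, I would note that the right-hand side is the soft closure of the soft set $int(cl(F,E))$ and is therefore soft closed. To make this explicit rather than invoking ``a closure is closed'' as a black box, I would apply $cl$ to both sides of $(F,E) = cl(int(cl(F,E)))$ and use idempotence of the soft closure from Proposition 2.11(3): this gives $cl(F,E) = cl(cl(int(cl(F,E)))) = cl(int(cl(F,E))) = (F,E)$. Hence $cl(F,E) = (F,E)$, which is exactly the statement that $(F,E)$ is soft closed.

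The main obstacle is the complement computation in the first paragraph that turns soft $\alpha$-closedness into the inclusion $cl(int(cl(F,E))) \widetilde{\subseteq} (F,E)$: one must apply the interior/closure duality in the correct nesting order and carefully track the reversal of $\widetilde{\subseteq}$ under complementation. Once that characterization is in hand, the remainder is a one-line sandwich argument together with the idempotence of the closure operator, so no further difficulty is expected.
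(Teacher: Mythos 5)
Your proof is correct and follows essentially the same route as the paper: both arguments reduce soft $\alpha$-closedness to the inclusion $cl(int(cl(F,E)))\ \widetilde{\subseteq}\ (F,E)$, combine it with the soft $\beta$-open inclusion to obtain $(F,E)=cl(int(cl(F,E)))$, and conclude that $(F,E)$ is soft closed because it equals a soft closure. Your only additions — deriving the $\alpha$-closed characterization by complementation and making the final ``a closure is closed'' step explicit via idempotence — are details the paper simply asserts.
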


\begin{proof}
Let $(F,E)\in S.\beta .O(X),(F,E)$ $\widetilde{\subseteq }$ $%
cl(int(cl(F,E))),$ since $(F,E)$ is soft $\alpha $-closed $cl(int(cl(F,E)))$ 
$\widetilde{\subseteq }$ $(F,E)$, then $cl(int(cl(F,E)))$ $\widetilde{%
\subseteq }$ $(F,E)$ $\widetilde{\subseteq }$ $%
cl(int(cl(F,E))),(F,E)=cl(int(cl(F,E)))$ which is soft closed.
\end{proof}

\begin{corollary}
Each soft $\beta $-closed and soft $\alpha $-open set is soft open.
\end{corollary}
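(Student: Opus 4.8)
The plan is to deduce this corollary from Theorem 3.14 by passing to the relative complement, exploiting the fact that the defining inclusions for soft $\beta$-closed and soft $\alpha$-open are exactly the complements of those for soft $\beta$-open and soft $\alpha$-closed. Concretely, I would set $(G,E)=(F,E)^{\prime }$ and show that the two hypotheses on $(F,E)$ translate into the two hypotheses of Theorem 3.14 for $(G,E)$; once $(G,E)$ is known to be soft closed, $(F,E)$ is soft open by Definition 2.9.

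The key tool is the standard pair of complement identities interchanging soft interior and soft closure, namely $int(F,E)^{\prime }=cl\big((F,E)^{\prime }\big)$ and $cl(F,E)^{\prime }=int\big((F,E)^{\prime }\big)$, which the paper already uses implicitly (e.g.\ in Propositions 3.5 and 3.9). Applying these repeatedly, I would compute $\big(int(cl(int(F,E)))\big)^{\prime }=cl(int(cl((F,E)^{\prime })))$. Then, starting from the soft $\beta$-closedness hypothesis $int(cl(int(F,E)))$ $\widetilde{\subseteq }$ $(F,E)$ and complementing (which reverses the inclusion), I get $(G,E)$ $\widetilde{\subseteq }$ $cl(int(cl(G,E)))$, so $(G,E)$ is soft $\beta$-open. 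Similarly, from the soft $\alpha$-openness hypothesis $(F,E)$ $\widetilde{\subseteq }$ $int(cl(int(F,E)))$, complementing yields $cl(int(cl(G,E)))$ $\widetilde{\subseteq }$ $(G,E)$, which is precisely the condition that $(G,E)$ is soft $\alpha$-closed.

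Having established that $(G,E)=(F,E)^{\prime }$ is both soft $\beta$-open and soft $\alpha$-closed, I would invoke Theorem 3.14 to conclude that $(G,E)$ is soft closed. By the definition of soft closed set, the relative complement $(G,E)^{\prime }=(F,E)$ then belongs to $\tau$, i.e.\ $(F,E)$ is soft open, which is the desired conclusion.

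The only real care point—and the place where a slip is easiest—is the bookkeeping in the complement identities: one must correctly verify that soft $\alpha$-closed is the De Morgan dual of soft $\alpha$-open and that the nested $int$--$cl$--$int$ expression complements to the nested $cl$--$int$--$cl$ expression, so that the hypotheses land exactly on Theorem 3.14 rather than on some mismatched condition. Once those dual identities are pinned down, the rest is a purely formal transcription and no genuine topological obstacle remains.
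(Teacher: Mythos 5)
Your proof is correct and is exactly the duality argument the paper intends: the corollary is stated without proof as the complement-dual of Theorem 3.14, and your passage to $(G,E)=(F,E)'$ via the identities $(int(F,E))'=cl((F,E)')$ and $(cl(F,E))'=int((F,E)')$ carries the hypotheses precisely onto those of that theorem. The bookkeeping you flag as the only care point does check out, so nothing further is needed.
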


\section{Soft $\protect\beta $-Continuous Mappings}

\begin{definition}
$[10]$ Let $(F,E)$ be a soft set $X$. The soft set $(F,E)$ is called a soft
point , denoted by $(x_{e},E)$, if for the element $e\in E,$ $F(e)=\{x\}$
and $F(e^{\prime })=\phi $ for all $e^{\prime }\in E-\{e\}.$
\end{definition}

We define the notion of soft $\beta $-continuity by using soft $\beta $-open
sets.

\begin{definition}
Let $(X,\tau ,E)$ and $(Y,\tau ^{\prime },E)$ be two soft topological
spaces. A function $f:(X,\tau ,E)\longrightarrow (Y,\tau ^{\prime },E)$ is
said to be
\end{definition}

$(i)$ $\left( \left[ 8\right] \right) $soft $semi$-continuons if $%
f^{-1}((G,E))$ is soft $semi$-open in $(X,\tau ,E)$, for every soft open set 
$(G,E)$ of $(Y,\tau ^{\prime },E)$.

\ \ \ \ \ \ \ \ \ \ \ 

$(ii)$ soft $pre$-continuons if $f^{-1}((G,E))$ is soft $pre$-open in $%
(X,\tau ,E)$, for every soft open set $(G,E)$ of $(Y,\tau ^{\prime },E)$.

\ \ \ \ \ \ \ \ \ \ \ \ 

$(iii)$ soft $\alpha $-continuons if $f^{-1}((G,E))$ is soft $\alpha $-open
in $(X,\tau ,E)$, for every soft open set $(G,E)$ of $(Y,\tau ^{\prime },E).$

\ \ \ \ \ \ \ \ \ \ \ \ 

$(iv)$ soft $\beta $-continuons if $f^{-1}((G,E))$ is soft $\beta $-open in $%
(X,\tau ,E)$, for every soft open set $(G,E)$ of $(Y,\tau ^{\prime },E).$

\ \ \ \ \ \ \ \ \ \ \ \ 

$(v)$ soft $\beta $-irresolute if $f^{-1}((G,E))$ is soft $\beta $-open in $%
(X,\tau ,E),$ for every soft $\beta $-open set $(G,E)$ of $(Y,\tau ^{\prime
},E)$.

\ \ \ \ \ \ \ \ \ \ \ \ \ \ \ \ \ \ \ \ \ \ \ 

It is clear that the class of soft $\beta $-continuity contains each of
classes soft $semi$-continuons and soft $pre$-continuons, the implications
between them and other types of soft continuities are given by the following
diagram.

\ \ \ \ \ \ \ \ \ \ \ 

soft continuity $\longrightarrow $\ \ soft $\alpha $-continuity $\
\longrightarrow $ \ soft semi-continuity\ \ \ \ \ \ \ \ \ \ \ \ \ \ \ \ \ \ 

$\ \ \ \ \ \ \ \ \ \ \ \ \ \ \ \ \ \ \ \ \ \ \ \ \ \ \ \ \ \ \ \ \ \ \ \ \ \
\ \ \downarrow $ \ \ \ \ \ \ \ \ \ \ \ \ \ \ \ \ \ \ \ \ \ \ \ \ \ \ \ \ \ \
\ $\downarrow $

\bigskip\ \ \ \ \ \ \ \ \ \ \ \ \ \ \ \ \ \ \ \ \ \ \ \ soft $pre$%
-continuity\ \ $\longrightarrow $\ \ \ \ \ soft $\beta $-continuity

\ \ \ \ \ \ \ \ \ \ \ \ \ \ 

The converses of these implications do not hold, in general, as show in the
following examples.

\begin{example}
Let $X=Y=\{x_{1},x_{2},x_{3}\}$, $E=\{e_{1},e_{2}\}$ and let the soft
topology on$\ X$ be soft indiscrete and on $Y$ be soft discrete. If we get
the mapping $f:(X,\tau ,E)\longrightarrow (Y,\tau ^{\prime },E)$ defined as
\end{example}

\ \ \ \ \ 

\ \ \ \ \ \ \ \ $\ \ \ f(x_{1})=x_{2},$ \ \ \ \ $f(x_{2})=x_{1},$\ $\ \ \ \
f(x_{3})=x_{3}$

\ \ \ \ \ \ \ \ \ \ \ \ \ \ \ \ \ \ \ \ \ \ \ \ \ \ \ \ \ \ \ \ \ \ \ \ \ \
\ \ \ \ \ \ \ \ \ \ \ \ \ \ \ \ \ \ \ \ \ \ \ \ \ \ \ \ \ \ \ \ \ \ \ \ \ \
\ \ \ \ \ \ \ \ \ 

then $f$ is soft $\beta $-continuous but not soft $semi$-continuous.

\begin{example}
Let $X=Y=\{x_{1},x_{2},x_{3}\}$, $E=\{e_{1},e_{2}\}$. Then $\tau =\{\Phi ,%
\widetilde{X},(F_{1},E),(F_{2},E),(F_{3},E)\}$ is a soft topological space
over $X$ and $\tau ^{\prime }=\{\Phi ,\widetilde{Y},(G_{1},E),(G_{2},E)\}$
is a soft topological space over $Y.$ Here $(F_{1},E),(F_{2},E),(F_{3},E)$
are soft sets over $X$ and $(G_{1},E),(G_{2},E)$ are soft sets over $Y,$%
defined as follows:
\end{example}

\ \ \ \ \ \ \ \ \ \ \ \ \ \ \ \ \ \ \ \ \ \ \ \ \ \ \ \ \ \ 

$F_{1}(e_{1})=\{x_{1}\},$ $F_{1}(e_{2})=\{x_{1}\},$ $F_{2}(e_{1})=\{x_{2}\},$
$F_{2}(e_{2})=\{x_{2}\},$ $F_{3}(e_{1})=\{x_{1},x_{2}\},$ $%
F_{3}(e_{2})=\{x_{1},x_{2}\}$

\ \ \ \ \ \ \ \ 

and

\ \ \ 

$G_{1}(e_{1})=\{x_{1}\},$ $G_{1}(e_{2})=\{x_{1}\},$ $G_{2}(e_{1})=%
\{x_{1},x_{2}\},$ $G_{2}(e_{2})=\{x_{1},x_{2}\}.$

\bigskip\ \ \ \ \ \ \ \ \ \ \ \ \ \ \ \ \ \ \ \ \ \ \ \ \ \ \ \ \ \ \ \ \ \
\ \ \ \ 

If we get the mapping $f:(X,\tau ,E)\longrightarrow (Y,\tau ^{\prime },E)$
defined as

\ \ \ \ \ 

\ \ \ \ \ \ \ \ $\ \ \ f(x_{1})=x_{1},$ \ \ \ \ $f(x_{2})=x_{3},$\ $\ \ \ \
f(x_{3})=x_{2}$

\ \ \ \ \ \ \ \ \ \ \ \ \ \ \ \ \ \ \ \ \ \ \ \ \ \ \ \ \ \ \ \ \ \ \ \ \ \
\ \ \ \ \ \ \ \ \ \ \ \ \ \ \ \ \ \ \ \ \ \ \ \ \ \ \ \ \ \ \ \ \ \ \ \ \ \
\ \ \ \ \ \ \ \ \ 

then $f$ is soft $\beta $-continuous but not soft $pre$-continuous, since $%
f^{-1}(G_{2})=\{\{x_{1},x_{3}\},\{x_{1},x_{3}\}\}$ is not a soft $pre$-open
set over $X$.

\ \ \ \ \ \ \ \ \ 

We give some characterizations of soft $\beta $-continuity.

\begin{theorem}
Let $f:(X,\tau ,E)\longrightarrow (Y,\tau ^{\prime },E)$ be a soft mapping,
then the following statements are equivalent.
\end{theorem}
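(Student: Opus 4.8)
The plan is to organize the proof as a single cycle of implications through the listed statements rather than verifying each equivalence separately, since this is the most economical route for a ``the following are equivalent'' theorem. Judging from the definitions just laid down---in particular the isolation of soft points in Definition 4.1 immediately before the continuity definitions---I expect the list to pair the global formulation of soft $\beta$-continuity (preimages of soft open sets are soft $\beta$-open) with a pointwise/neighbourhood formulation, with a closed-preimage formulation, and with one or two conditions written through the operators $cl$ and $int$ applied to an arbitrary soft set in $X$ or $Y$. Concretely I would prove global $\Rightarrow$ pointwise $\Rightarrow$ global, then global $\Leftrightarrow$ closed-preimage, then closed-preimage $\Leftrightarrow$ operator form, thereby closing the loop.

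For the pointwise/neighbourhood equivalence, the forward direction is immediate: given a soft point $(x_e,E)$ with $f(x_e)$ in a soft open set $(G,E)$, the soft set $(F,E)=f^{-1}((G,E))$ is soft $\beta$-open by hypothesis, contains $(x_e,E)$, and satisfies $f((F,E))\ \widetilde{\subseteq}\ (G,E)$. The reverse direction is where Proposition 3.5 does the work: I would express $f^{-1}((G,E))$ as the soft union of the soft $\beta$-open witnesses attached to each of its soft points and conclude that $f^{-1}((G,E))$ is itself soft $\beta$-open, because arbitrary unions of soft $\beta$-open sets are again soft $\beta$-open (the supratopology property).

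The passage between the open-preimage and closed-preimage formulations rests on two identities: that $f^{-1}$ commutes with relative complements, $f^{-1}((G,E)')=(f^{-1}((G,E)))'$, and that, via the de Morgan relations $cl(\cdot)'=int(\cdot')$ and $int(\cdot)'=cl(\cdot')$ between soft closure and soft interior (the same manipulation already carried out in the proof of Theorem 3.12), a soft set is soft $\beta$-closed precisely when its relative complement is soft $\beta$-open. Setting $(K,E)=(G,E)'$ then exchanges the two conditions. To reach the operator form I would unfold the definition of soft $\beta$-closed, $int(cl(int(F,E)))\ \widetilde{\subseteq}\ (F,E)$, at $(F,E)=f^{-1}(cl((B,E)))$ for an arbitrary soft set $(B,E)$ of $Y$, and then transfer the statement to images of soft sets of $X$ using the one-sided containments $(A,E)\ \widetilde{\subseteq}\ f^{-1}(f((A,E)))$ and $f(f^{-1}((B,E)))\ \widetilde{\subseteq}\ (B,E)$, together with the monotonicity and idempotence of $cl$ and $int$ from Proposition 2.11.

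The main obstacle I anticipate is not any single computation but keeping the inclusion directions straight throughout the cycle. Each of $cl$, $int$, $f$ and $f^{-1}$ is monotone, yet the operator $int$ converts the outer ``$\widetilde{\subseteq}$'' defining soft $\beta$-openness into the reversed inclusion defining soft $\beta$-closedness, and $f$---unlike $f^{-1}$---does not commute with relative complement. Consequently the image-based condition cannot be obtained by naive substitution and must be routed through the one-sided image--preimage containments above, applied in the correct order so that every invocation of monotonicity preserves rather than reverses the intended inclusion. Once this orientation is fixed, each implication in the cycle reduces to a short application of Propositions 2.11 and 3.5 and the complement identities.
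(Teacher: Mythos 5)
Your plan is correct and uses essentially the same machinery as the paper for the closed-preimage and operator conditions (the complement/de Morgan passage between open and closed preimages, the unfolding of soft $\beta$-closedness at $f^{-1}(cl(\cdot))$, and the one-sided containments $(A,E)\ \widetilde{\subseteq}\ f^{-1}(f(A,E))$ and $f(f^{-1}(B,E))\ \widetilde{\subseteq}\ (B,E)$). The organizational difference is real, though: the paper runs the cycle $(i)\Rightarrow(iii)\Rightarrow(iv)\Rightarrow(v)\Rightarrow(i)$ and proves only the dangling implication $(i)\Rightarrow(ii)$, never showing that the pointwise condition implies any of the others --- so as written the paper does not actually establish that $(ii)$ is equivalent to the rest. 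You treat $(i)\Leftrightarrow(ii)$ as a genuine two-way equivalence and supply the missing converse by writing $f^{-1}((G,E))$ as the soft union of the soft $\beta$-open witnesses attached to its soft points and invoking Proposition 3.5 (closure of $S.\beta.O(X)$ under arbitrary soft unions). That is the standard and correct way to close the loop, and it works here precisely because every soft set is the soft union of the soft points it contains; your approach therefore buys a complete proof of the stated equivalence, at the modest cost of one extra appeal to the supratopology property. Your cautionary remarks about inclusion directions and about $f$ not commuting with relative complement are well taken and are exactly the points where the paper's own chain $(iv)\Rightarrow(v)\Rightarrow(i)$ has to be handled via the image--preimage containments rather than by substitution.
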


$(i)$ $f$ \ is soft $\beta $-continuous.

\ \ \ \ \ \ \ \ \ \ \ \ \ \ \ \ \ \ \ \ \ \ \ \ \ \ \ \ \ \ \ 

$(ii)$ For each soft point $(x_{e},E)$ over $X$ and each soft open $(G,E)$
containing $f(x_{e},E)=(f(x)_{e},E)$ over $Y,$ there exist a soft $\beta $%
-open set $(F,E)$ over $X$ containing $(x_{e},E)$ such that $f(F,E)$ $%
\widetilde{\subseteq }$ $(G,E).$

\ \ \ \ \ \ \ \ \ \ \ \ \ \ \ \ \ \ \ 

$(iii)$ The inverse image of each soft closed set in $Y$ is soft $\beta $%
-closed in $X$.

\ \ \ \ \ \ \ \ \ \ \ \ \ \ \ \ \ \ 

$(iv)$ $int(cl(int(f^{-1}(G,E))))$ $\widetilde{\subseteq }$ $f^{-1}(cl(G,E))$
for each soft set $(G,E)$ over $Y.$

\ \ \ \ \ \ \ \ \ \ \ \ \ \ \ \ \ \ \ \ \ \ \ \ 

$(v)$ $f(int(cl(int(F,E))))$ $\widetilde{\subseteq }$ $cl(f(F,E))$ for each
soft set $(F,E)$ over $X.$

\ \ \ \ \ \ \ \ \ \ \ \ \ \ \ \ \ \ \ \ \ \ \ \ \ \ \ \ \ \ \ 

\begin{proof}
$(i)\Longrightarrow (ii)$ Since $(G,E)$ $\widetilde{\subseteq }$ $Y$
containing $f(x_{e},E)=(f(x)_{e},E)$ is soft open, then $f^{-1}(G,E)\in
S.\beta .O(X).$ Soft set $(F,E)=f^{-1}(G,E)$ which containing $(x_{e},E),$
therefore $f(F,E)$ $\widetilde{\subseteq }$ $(G,E).$

\ \ \ \ \ \ \ \ \ \ \ \ 

$(i)\Longrightarrow (iii)$ Let $(G,E)\in S.C(Y),$then $(\widetilde{Y}%
-(G,E))\in S.O(Y).$Since\ $f$ is soft $\beta $-continuous, $f^{-1}(%
\widetilde{Y}-(G,E))\in S.\beta .O(X).$ Hence $[\widetilde{X}%
-f^{-1}(G,E)]\in S.\beta .O(X).$ Then $f^{-1}(G,E)\in S.\beta .C(X)$

\ \ \ \ \ \ \ \ \ \ \ \ \ \ \ \ \ \ \ \ \ \ \ \ \ \ \ 

$(iii)\Longrightarrow (iv)$ Let $(G,E)$ be a soft set over $Y,$ then $%
f^{-1}(cl(G,E))\in S.\beta .C(X).$ $f^{-1}(cl(G,E))$ $\widetilde{\supseteq }$
$int(cl(int(f^{-1}(cl(G,E)))))$ $\widetilde{\supseteq }$ $%
int(cl(int(f^{-1}(G,E))))$

\ \ \ \ \ \ \ \ \ \ \ \ \ \ \ \ \ \ \ 

$(vi)\Longrightarrow (v)$ Let $(F,E)$ be a soft set over $X$ and $%
f(F,E)=(G,E)$. Then, according to\ $(iv)$ \ \ $int(cl(int(f^{-1}(f(F,E)))))$ 
$\widetilde{\subseteq }$ $f^{-1}(cl(f(F,E))\Longrightarrow $\ \ $%
int(cl(int(F,E)))$ $\widetilde{\subseteq }$ $f^{-1}(cl(f(F,E))$\ \ $%
\Longrightarrow $\ $\ f(int(cl(int(F,E))))$ $\widetilde{\subseteq }$ $%
cl(f(F,E))$

\ \ \ \ \ \ \ \ \ \ \ \ \ \ \ \ \ \ \ \ \ \ \ \ \ \ \ \ \ \ \ \ \ \ \ \ \ \
\ \ \ \ \ \ \ \ 

$(v)\Longrightarrow (i)$ Let $(G,E)\in S.O(Y),$ $(H,E)=$\ $\widetilde{Y}%
-(G,E)$\ and $(F,E)=$ $f^{-1}(H,E),$ by\ $(v)$\ $\
f(int(cl(int(f^{-1}(H,E)))))$ $\widetilde{\subseteq }$ $cl(f(f^{-1}(H,E)))$ $%
\widetilde{\subseteq }$ \ $cl(H,E)=$\ $(H,E),$ so $int(cl(int(f^{-1}(H,E))))$
$\widetilde{\subseteq }$ \ $f^{-1}(H,E).$ Then\ \ $f^{-1}(H,E)$\ $\in
S.\beta .C(X)$, thus (by $(iii)$) $f$ \ is soft $\beta $-continuous.\ \ \ \
\ \ \ \ 
\end{proof}

\begin{remark}
The composition of two soft $\beta $-continuous mappings need not be soft $%
\beta $-continuous, in general, as show by the following example.
\end{remark}

\begin{example}
Let $X=Z=\{x_{1},x_{2},x_{3}\}$, $Y=\{x_{1},x_{2},x_{3},x_{4}\}$ and $%
E=\{e_{1},e_{2}\}.$ Then $\tau =\{\Phi ,\widetilde{X},(F,E)\}$ is a soft
topological space over $X,$ $\tau ^{\prime }=\{\Phi ,\widetilde{Y},(G,E)\}$
is a soft topological space over $Y$ and $\tau ^{\prime \prime }=\{\Phi ,%
\widetilde{Z},(H_{1},E),(H_{2},E)\}$ is a soft topological space over $Z.$
Here $(F,E)$ is a soft set over $X$, $(G,E)$ is a soft set over $Y$ and $%
(H_{1},E),(H_{2},E)$ are soft sets over $Z$ defined as follows:
\end{example}

\ \ \ \ \ \ \ \ \ \ \ \ \ \ \ \ \ \ \ \ \ \ \ \ \ \ \ \ \ \ 

$F(e_{1})=\{x_{1}\},$ $F(e_{2})=\{x_{1}\},$ $G(e_{1})=\{x_{1},x_{3}\},$ $%
G(e_{2})=\{x_{1},x_{3}\}$

\ \ \ \ \ \ \ \ \ \ \ \ \ \ \ \ \ \ \ \ \ \ \ \ \ \ \ \ \ \ \ \ \ \ \ \ \ 

$H_{1}(e_{1})=\{x_{3}\},$ $H_{1}(e_{2})=\{x_{3}\},H_{2}(e_{1})=\{x_{1},x_{2}%
\},$ $H_{2}(e_{2})=\{x_{1},x_{2}\}$

\ \ \ \ \ \ \ \ \ \ \ \ \ \ \ \ \ \ \ \ \ \ \ \ \ \ \ \ \ \ \ \ \ \ \ \ \ \ 

If we get the mapping the identity mapping $I:(X,\tau ,E)\longrightarrow
(Y,\tau ^{\prime },E)$ and $f:(Y,\tau ^{\prime },E)\longrightarrow (Z,\tau
^{\prime \prime },E)$ defined as

\ \ \ \ \ 

\ \ \ \ \ \ \ \ $\ \ \ f(x_{1})=x_{1},$ \ \ \ \ $f(x_{2})=f(x_{4})=x_{2},$\ $%
\ \ \ \ f(x_{3})=x_{3}$

\ \ \ \ \ \ \ \ \ \ \ \ \ \ \ \ \ \ \ \ \ \ \ \ \ \ \ \ \ \ \ \ \ \ \ \ \ \
\ \ \ \ \ \ \ \ \ \ \ \ \ \ \ \ \ \ \ \ \ \ \ \ \ \ \ \ \ \ \ \ \ \ \ \ \ \
\ \ \ \ \ \ \ \ \ 

It is clear that each of $I$ and $f$ is soft $\beta $-continuous but $f$ $o$ 
$I$ not soft $\beta $-continuous, since

$(f$ $o$ $I)^{-1}(H_{1},E)=\{\{x_{3}\},\{x_{3}\}\}$ is not a soft $\beta $%
-open set over $X$.

\begin{definition}
A function $f:(X,\tau ,E)\longrightarrow (Y,\tau ^{\prime },E)$ is called
soft $\beta $-homeomorphism (resp.soft $\beta r$-homeomorphism) if $f$ is a
soft $\beta $-continuous bijection (resp. sorf $\beta $-irresolute
bijection) and $f^{-1}:(Y,\tau ^{\prime },E)\longrightarrow (X,\tau ,E)$ is
a soft $\beta $-continuous (soft $\beta $-irresolute).
\end{definition}

Now we can give the following definition by taking the soft space $(X,\tau
,E)$ instead of the soft space $(Y,\tau ^{\prime },E)$.

\begin{definition}
For a soft topological space $(X,\tau ,E),$ we define the following two
collections of functions:
\end{definition}

$S\beta $-$h(X,\tau ,E)=$\{ $f\mid f:(X,\tau ,E)\longrightarrow (X,\tau ,E)$
is a soft $\beta $-continuous bijection, $\ f^{-1}:(X,\tau
,E)\longrightarrow (X,\tau ,E)$ is soft $\beta $-continuous \}

\ \ \ \ 

$S\beta r$-$h(X,\tau ,E)=$ \{ $f\mid f:(X,\tau ,E)\longrightarrow (X,\tau
,E) $ is a soft $\beta $-irresolute bijection, $f^{-1}:(X,\tau
,E)\longrightarrow (X,\tau ,E)$ is soft $\beta $-irresolute \}

\begin{theorem}
\bigskip For a soft topological space $(X,\tau ,E)$, $S$-$h(X,\tau ,E)$ $%
\widetilde{\subseteq }$ $S\beta r$-$h(X,\tau ,E)$ $\widetilde{\subseteq }$ $%
S\beta $-$h(X,\tau ,E),$ where $S$-$h(X,\tau ,E)=\{f\mid f:(X,\tau
,E)\longrightarrow (X,\tau ,E)$ is a soft-homeomorphism$\}$ .
\end{theorem}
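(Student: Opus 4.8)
The plan is to establish the two soft inclusions separately. For the first, $S$-$h(X,\tau ,E)$ $\widetilde{\subseteq }$ $S\beta r$-$h(X,\tau ,E)$, I would take an arbitrary soft homeomorphism $f$ and promote it to a soft $\beta r$-homeomorphism; for the second, $S\beta r$-$h(X,\tau ,E)$ $\widetilde{\subseteq }$ $S\beta $-$h(X,\tau ,E)$, I would show that soft $\beta $-irresoluteness already forces soft $\beta $-continuity.

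For the first inclusion, let $f$ be a soft homeomorphism, so that $f$ is a bijection and both $f$ and $f^{-1}$ are soft continuous. The decisive preliminary step is to record that, since $f$ is bijective with $f$ and $f^{-1}$ soft continuous, the inverse image under $f$ commutes with the soft closure and the soft interior operators, namely
\[
f^{-1}(cl(G,E))=cl(f^{-1}(G,E)),\qquad f^{-1}(int(G,E))=int(f^{-1}(G,E))
\]
for every soft set $(G,E)$ over $X$. Here the inclusion $cl(f^{-1}(G,E))$ $\widetilde{\subseteq }$ $f^{-1}(cl(G,E))$ is a consequence of soft continuity of $f$ (the set $f^{-1}(cl(G,E))$ is soft closed and contains $f^{-1}(G,E)$, hence contains its soft closure), while the reverse inclusion is obtained by applying the same argument to the soft continuous map $f^{-1}$; the interior identity follows dually using relative complements. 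Granting these identities, if $(G,E)\in S.\beta .O(X)$, i.e. $(G,E)$ $\widetilde{\subseteq }$ $cl(int(cl(G,E)))$, then applying the monotone operation $f^{-1}$ and substituting the commutation identities yields
\[
f^{-1}(G,E)\ \widetilde{\subseteq }\ f^{-1}(cl(int(cl(G,E))))=cl(int(cl(f^{-1}(G,E)))),
\]
so $f^{-1}(G,E)\in S.\beta .O(X)$ and $f$ is soft $\beta $-irresolute. Since $f^{-1}$ is again a soft homeomorphism, the same reasoning applied to $f^{-1}$ shows it too is soft $\beta $-irresolute, whence $f\in S\beta r$-$h(X,\tau ,E)$.

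For the second inclusion, let $f\in S\beta r$-$h(X,\tau ,E)$. By Remark 3.2 every soft open set is soft $\beta $-open. Consequently, for every soft open set $(G,E)$ the set $(G,E)$ is in particular soft $\beta $-open, so the soft $\beta $-irresoluteness of $f$ gives $f^{-1}(G,E)\in S.\beta .O(X)$; that is, $f$ is soft $\beta $-continuous. Applying the same observation to $f^{-1}$ shows that $f^{-1}$ is soft $\beta $-continuous as well, and therefore $f\in S\beta $-$h(X,\tau ,E)$.

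I expect the main obstacle to be the commutation step in the first inclusion: both inclusions ultimately rest on the identities $f^{-1}(cl(G,E))=cl(f^{-1}(G,E))$ and $f^{-1}(int(G,E))=int(f^{-1}(G,E))$, and the only genuine subtlety is securing the reverse (``wrong-way'') inclusions, which is precisely where bijectivity together with the soft continuity of both $f$ and $f^{-1}$ is indispensable. Once these are in place the argument is routine, the second inclusion being immediate from the soft open $\Longrightarrow$ soft $\beta $-open implication recorded in Remark 3.2.
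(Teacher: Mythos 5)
Your proposal follows essentially the same route as the paper: both inclusions are proved by showing that a soft homeomorphism is soft $\beta$-irresolute via the identity $f^{-1}(cl(int(cl(G,E))))=cl(int(cl(f^{-1}(G,E))))$, and then observing that soft $\beta$-irresoluteness implies soft $\beta$-continuity because every soft open set is soft $\beta$-open. The only difference is that you explicitly justify the commutation of $f^{-1}$ with $cl$ and $int$ (correctly using both soft continuity hypotheses and bijectivity), whereas the paper asserts this equality without proof.
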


\begin{proof}
First we show that every soft-homeomorphism $f:(X,\tau ,E)\longrightarrow
(Y,\tau ^{\prime },E)$ is a soft $\beta r$-homeomorphism. Let $(G,E)\in $ $%
S.\beta .O(Y),$ then $(G,E)$ $\widetilde{\subseteq }$ $cl(int(cl(G,E))).$
Hence, $f^{-1}((G,E))$ $\widetilde{\subseteq }$ $%
f^{-1}(cl(int(cl(G,E))))=cl(int(cl(f^{-1}(G,E))))$ and so $f^{-1}((G,E))\in
S.\beta .O(X).$ Thus, $f$ $\ $is soft $\beta $-irresolute. In a similar way,
it is show that $f^{-1}$ is soft $\beta $-irresolute. Hence, we have that $S$%
-$h(X,\tau ,E)$ $\widetilde{\subseteq }$ $S\beta r$-$h(X,\tau ,E)$.

Finally, it is obvious that $S\beta r$-$h(X,\tau ,E)$ $\widetilde{\subseteq }
$ $S\beta $-$h(X,\tau ,E),$ because every soft $\beta $-irresolute function
is soft $\beta $-continuous.
\end{proof}

\begin{theorem}
\bigskip For a soft topological space $(X,\tau ,E)$, the collection $S\beta
r $-$h(X,\tau ,E)$ forms a group under the composition of functions.
\end{theorem}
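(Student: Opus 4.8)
The plan is to verify the four group axioms for $S\beta r$-$h(X,\tau ,E)$ under composition, treating $S\beta r$-$h(X,\tau ,E)$ as the candidate group and composition $g\circ f$ as the operation. The key observation that makes the whole argument run is that soft $\beta$-irresolute maps behave well under composition, unlike soft $\beta$-continuous maps (as the counterexample in Example~4.8 shows). So the first step I would carry out is closure: given $f,g\in S\beta r$-$h(X,\tau ,E)$, I must show $g\circ f$ lies in the collection. This means checking that $g\circ f$ is a soft $\beta$-irresolute bijection whose inverse $(g\circ f)^{-1}=f^{-1}\circ g^{-1}$ is also soft $\beta$-irresolute. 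Bijectivity is immediate since a composition of bijections is a bijection. For irresoluteness, let $(G,E)\in S.\beta .O(X)$; then $g^{-1}(G,E)\in S.\beta .O(X)$ because $g$ is soft $\beta$-irresolute, and applying $f$ being soft $\beta$-irresolute gives $f^{-1}(g^{-1}(G,E))=(g\circ f)^{-1}(G,E)\in S.\beta .O(X)$. The same reasoning applied to $f^{-1}$ and $g^{-1}$ (both soft $\beta$-irresolute by membership in the collection) shows $(g\circ f)^{-1}$ is soft $\beta$-irresolute, so $g\circ f\in S\beta r$-$h(X,\tau ,E)$.

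Next I would dispatch associativity, which is free: composition of functions (here, soft mappings) is always associative, so $(h\circ g)\circ f=h\circ(g\circ f)$ requires no computation specific to the soft setting. For the identity element I would take the identity soft mapping $I_{X}:(X,\tau ,E)\longrightarrow (X,\tau ,E)$. I must confirm $I_{X}\in S\beta r$-$h(X,\tau ,E)$: it is clearly a bijection, and for any $(G,E)\in S.\beta .O(X)$ we have $I_{X}^{-1}(G,E)=(G,E)\in S.\beta .O(X)$, so $I_X$ is soft $\beta$-irresolute, and $I_X^{-1}=I_X$ is likewise soft $\beta$-irresolute; moreover $I_{X}\circ f=f\circ I_{X}=f$ for all $f$.

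Finally, for inverses I would observe that the collection is defined symmetrically: $f\in S\beta r$-$h(X,\tau ,E)$ requires both $f$ and $f^{-1}$ to be soft $\beta$-irresolute bijections. Hence $f^{-1}$ satisfies exactly the same two conditions (with the roles of $f$ and $f^{-1}$ swapped), so $f^{-1}\in S\beta r$-$h(X,\tau ,E)$, and $f\circ f^{-1}=f^{-1}\circ f=I_{X}$. I do not expect any genuine obstacle here, since the symmetric definition of the collection was arranged precisely so that inverses stay inside it. The only step that carries real content is the closure verification, and its heart is the stability of soft $\beta$-irresoluteness under composition; the crucial point — and the reason the theorem is stated for $S\beta r$-$h$ rather than $S\beta$-$h$ — is that one never needs to pass soft open sets through the maps, only soft $\beta$-open sets, so the pathology witnessed by Example~4.8 for mere soft $\beta$-continuity simply cannot arise.
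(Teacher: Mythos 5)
Your proof is correct and takes essentially the same route as the paper: verify closure under composition, the identity, and inverses for $S\beta r$-$h(X,\tau ,E)$. In fact you spell out the one step the paper merely asserts, namely that a composition of soft $\beta $-irresolute bijections (and its inverse $f^{-1}\circ g^{-1}$) is again soft $\beta $-irresolute, so your write-up is, if anything, more complete than the original.
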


\begin{proof}
If $f:(X,\tau ,E)\longrightarrow (Y,\tau ^{\prime },E)$ and $g:(Y,\tau
^{\prime },E)\longrightarrow (Z,\tau ^{\prime \prime },E)$ are soft $\beta r$%
-homeomorphism, then their composition $gof:(X,\tau ,E)\longrightarrow
(Z,\tau ^{\prime \prime },E)$ is a soft $\beta r$-homeomorphism. It is
obvious that for a bijective soft $\beta r$-homeomorphism $f:(X,\tau
,E)\longrightarrow (Y,\tau ^{\prime },E),$ $f^{-1}:(Y,\tau ^{\prime
},E)\longrightarrow (X,\tau ,E)$ is also a soft $\beta r$-homeomorphism and
the identity $1:(X,\tau ,E)\longrightarrow (X,\tau ,E)$ is a soft $\beta r$%
-homeomorphism. A binary operation $\alpha :S\beta r$-$h(X,\tau ,E)\times
S\beta r$-$h(X,\tau ,E)\longrightarrow S\beta r$-$h(X,\tau ,E)$ is well
defined by $\alpha (a,b)=boa,$ where $a,b\in S\beta r$-$h(X,\tau ,E)$ and $%
boa$ is the composition of $a$ and $b.$ By using the above properties, the
set $S\beta r$-$h(X,\tau ,E)$ forms a group under composition of function .
\end{proof}

\begin{theorem}
The group $S$-$h(X,\tau ,E)$ of all soft homeomorphisms on $(X,\tau ,E)$ is
a subgroup of $S\beta r$-$h(X,\tau ,E).$
\end{theorem}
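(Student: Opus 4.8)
The plan is to apply the subgroup criterion, leaning on the two results just established. We already have the inclusion $S$-$h(X,\tau ,E)$ $\widetilde{\subseteq }$ $S\beta r$-$h(X,\tau ,E)$, and the larger collection $S\beta r$-$h(X,\tau ,E)$ has been shown to be a group under the composition operation $\alpha (a,b)=b\circ a$. Thus it suffices to verify that the subset $S$-$h(X,\tau ,E)$ is nonempty, is closed under this composition, and is closed under passage to inverses; these three conditions make it a subgroup of the ambient group, and no separate verification of associativity is then required.

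First I would note nonemptiness: the identity map $1:(X,\tau ,E)\longrightarrow (X,\tau ,E)$ is a soft continuous bijection whose inverse (itself) is soft continuous, hence a soft-homeomorphism, so $1\in S$-$h(X,\tau ,E)$; this is moreover exactly the identity element of $S\beta r$-$h(X,\tau ,E)$ produced when that collection was shown to be a group.

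Next I would check closure under composition and under inverses. If $f,g\in S$-$h(X,\tau ,E)$, then $g\circ f$ is a bijection, is soft continuous as a composite of soft continuous maps, and has soft continuous inverse $f^{-1}\circ g^{-1}$; hence $g\circ f\in S$-$h(X,\tau ,E)$, and since the group law on $S\beta r$-$h(X,\tau ,E)$ is precisely composition, the subset is closed under the operation. For inverses, the defining condition of a soft-homeomorphism is symmetric in $f$ and $f^{-1}$ (both are required to be soft continuous bijections), so $f\in S$-$h(X,\tau ,E)$ immediately gives $f^{-1}\in S$-$h(X,\tau ,E)$.

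The mathematics here is entirely routine; the one point deserving care is purely organizational, namely confirming that the restriction of the operation $\alpha (a,b)=b\circ a$ to $S$-$h(X,\tau ,E)$ coincides with ordinary composition of homeomorphisms, so that the subset genuinely inherits the group structure rather than merely being an abstract subset. As $\alpha $ is literally composition, this identification is immediate, associativity is inherited from the ambient group $S\beta r$-$h(X,\tau ,E)$, and the closure facts above complete the verification that $S$-$h(X,\tau ,E)$ is a subgroup of $S\beta r$-$h(X,\tau ,E)$.
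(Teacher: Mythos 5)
Your proposal is correct and follows essentially the same route as the paper: both verify the subgroup criterion for $S$-$h(X,\tau ,E)$ inside the group $S\beta r$-$h(X,\tau ,E)$, relying on the previously established inclusion and group structure (the paper uses the one-step test $\alpha (a,b^{-1})=b^{-1}\circ a\in S$-$h(X,\tau ,E)$ together with nonemptiness, while you spell out closure under composition and inverses separately). Your version is slightly more detailed but mathematically identical.
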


\begin{proof}
For any $a,b\in S$-$h(X,\tau ,E),$ we have $\alpha (a,b^{-1})=b^{-1}o$ $a\in
S$-$h(X,\tau ,E)$ and $1_{X}\in S$-$h(X,\tau ,E)\neq \varnothing .$ Thus,
using (Theorem 4.10) and (Theorem 4.11), it is obvious that the group $S$-$%
h(X,\tau ,E)$ is a subgroup of $S\beta r$-$h(X,\tau ,E).$
\end{proof}

For a soft topological space $(X,\tau ,E),$ we can construct a new group $%
S\beta r$-$h(X,\tau ,E)$ satisfying the property: if there exists a
homeomorphism $(X,\tau ,E)\cong (Y,\tau ^{\prime },E),$ then there exists a
group isomorphism $S\beta r$-$h(X,\tau ,E)\cong S\beta r$-$h(X,\tau ,E).$

\begin{corollary}
Let $f:(X,\tau ,E)\longrightarrow (Y,\tau ^{\prime },E)$ and $g:(Y,\tau
^{\prime },E)\longrightarrow (Z,\tau ^{\prime \prime },E)$ be two functions
between soft topological spaces.
\end{corollary}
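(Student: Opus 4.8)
The plan is to reduce everything to the elementary preimage identity $(g\circ f)^{-1}((W,E))=f^{-1}(g^{-1}((W,E)))$, valid for any soft set $(W,E)$ over $Z$, and then to chase the definitions of soft $\beta$-continuity and soft $\beta$-irresolution in the correct order. Since the preceding Remark 4.7 and Example 4.8 show that the composite of two soft $\beta$-continuous maps need not be soft $\beta$-continuous, the point of the corollary must be to supply the extra hypothesis on the first factor that rescues the composite; I would therefore prove both natural compositional statements at once.

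For the claim that $g\circ f$ is soft $\beta$-continuous when $f$ is soft $\beta$-irresolute and $g$ is soft $\beta$-continuous, I would fix an arbitrary soft open set $(W,E)$ over $Z$. Soft $\beta$-continuity of $g$ yields that $g^{-1}((W,E))\in S.\beta.O(Y)$. Because this intermediate set is soft $\beta$-open (and not merely soft open), the soft $\beta$-irresolution of $f$ is exactly the hypothesis that applies, giving $f^{-1}(g^{-1}((W,E)))\in S.\beta.O(X)$. Rewriting this preimage as $(g\circ f)^{-1}((W,E))$ via the identity above shows that the preimage of every soft open set of $Z$ is soft $\beta$-open in $X$, which is precisely soft $\beta$-continuity of $g\circ f$.

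For the parallel claim that $g\circ f$ is soft $\beta$-irresolute when both $f$ and $g$ are soft $\beta$-irresolute, I would run the identical argument but start from an arbitrary soft $\beta$-open set $(W,E)$ over $Z$. Soft $\beta$-irresolution of $g$ sends it to a member $g^{-1}((W,E))$ of $S.\beta.O(Y)$, soft $\beta$-irresolution of $f$ then sends this to a member of $S.\beta.O(X)$, and the same rewriting of the preimage completes the verification.

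I do not expect any genuine obstacle: the entire argument is a definitional chase resting on the preimage identity. The only point that needs care is ensuring that the hypothesis invoked at each stage matches the \emph{kind} of openness produced by the previous stage, namely that the intermediate set $g^{-1}((W,E))$ is soft $\beta$-open so that $f$'s irresolution, rather than mere $\beta$-continuity, can legitimately be applied. This is exactly the mismatch that fails in Example 4.8, so the role of irresolution in the first factor is essential and is what the corollary is designed to isolate.
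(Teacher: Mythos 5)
Your proposal proves a different statement from the one the corollary actually asserts. The sentence you were shown is only the preamble; the substantive content of the corollary consists of the three items that follow it in the source: (i) a soft $\beta r$-homeomorphism $f:(X,\tau ,E)\longrightarrow (Y,\tau ^{\prime },E)$ induces a group isomorphism $f_{\ast }$ on $S\beta r$-$h$ given by $f_{\ast }(a)=f\circ a\circ f^{-1}$; (ii) $(g\circ f)_{\ast }=g_{\ast }\circ f_{\ast }$; and (iii) $(1_{X})_{\ast }=1$. In other words, the corollary is the functoriality of the group construction $(X,\tau ,E)\mapsto S\beta r$-$h(X,\tau ,E)$ established in Theorems 4.10--4.12, not a closure-under-composition statement for soft $\beta$-continuity. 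The paper disposes of it with ``Straightforward,'' leaning on Theorem 4.11, where the needed composition and inversion facts for soft $\beta r$-homeomorphisms were already recorded.

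That said, what you did prove is not wasted: the fact that a composition of soft $\beta$-irresolute maps is soft $\beta$-irresolute (your second claim) is exactly the ingredient needed to check that $f_{\ast }(a)=f\circ a\circ f^{-1}$ actually lands in $S\beta r$-$h(Y,\tau ^{\prime },E)$, and your first claim correctly identifies why irresoluteness of the inner factor is the right hypothesis (the mismatch exhibited in Example 4.8). But to prove the corollary you must still supply: that $f_{\ast }$ is a homomorphism, i.e.\ $f\circ (b\circ a)\circ f^{-1}=(f\circ b\circ f^{-1})\circ (f\circ a\circ f^{-1})$ by inserting $f^{-1}\circ f$; that $f_{\ast }$ is bijective with inverse $(f^{-1})_{\ast }$; and the direct verifications of (ii) and (iii), which are one-line associativity computations. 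None of these appear in your write-up, so as it stands the proposal does not establish the stated result.
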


$(i)$ For a soft $\beta r$-homeomorphism $f:(X,\tau ,E)\longrightarrow
(Y,\tau ^{\prime },E),$ there exists an isomorphism, say

$f_{\ast }:S\beta r$-$h(X,\tau ,E)\longrightarrow S\beta r$-$h(X,\tau ,E),$
defined $f_{\ast }(a)=f$ $o$ $a$ $o$ $f^{-1},$ for any element $a\in S\beta
r $-$h(X,\tau ,E)$.

\ \ \ \ \ \ \ \ \ \ \ \ \ \ \ \ \ \ 

$(ii)$\ For two soft $\beta r$-homeomorphism $f:(X,\tau ,E)\longrightarrow
(Y,\tau ^{\prime },E)$ and

$g:(Y,\tau ^{\prime },E)\longrightarrow (Z,\tau ^{\prime \prime },E)$, $%
(gof)_{\ast }=g_{\ast }o$ $f_{\ast }:S\beta r$-$h(X,\tau ,E)\longrightarrow
S\beta r$-$h(Z,\tau ^{\prime \prime },E)$ holds.

\ \ \ \ \ \ \ \ \ \ \ \ \ \ \ \ \ \ \ \ \ \ \ 

$(iii)$ For the identity function $1_{X}:(X,\tau ,E)\longrightarrow (X,\tau
,E),$ $(1_{X})_{\ast }=1:S\beta r$-$h(X,\tau ,E)\longrightarrow S\beta r$-$%
h(X,\tau ,E)$ holds where $1$ denotes the identity isomorphism.

\begin{proof}
Straightforward .
\end{proof}

\end{document}